\documentclass[a4paper,12pt]{article}

\usepackage{amsmath,amssymb,amsthm, mathrsfs}
\usepackage{latexsym}
\usepackage{graphics}
\usepackage{hyperref}
\usepackage{graphicx,psfrag}
\usepackage[bf, small]{titlesec}
\usepackage{authblk} 

\usepackage{lineno}


\setlength{\topmargin}{-0.5cm}
\setlength{\oddsidemargin}{0.2cm}
\setlength{\evensidemargin}{0.2cm}
\setlength{\textheight}{21cm}
\setlength{\textwidth}{16cm}
\setlength{\footskip}{2cm}
\setlength{\columnsep}{1cm}


\theoremstyle{plain}
\newtheorem{Thm}{Theorem}[section]
\newtheorem{Lem}[Thm]{Lemma}
\newtheorem{Prop}[Thm]{Proposition}
\newtheorem{Cor}[Thm]{Corollary}

\theoremstyle{definition}

\usepackage{standalone}
\usepackage{pgfpages,tikz,pgfkeys,pgfplots}
\usetikzlibrary{arrows,positioning,matrix,fit,backgrounds,shapes,shapes.geometric, intersections}

\usetikzlibrary{shapes.callouts,decorations.pathmorphing} 
\usetikzlibrary{shadows} 
\usepackage{calc} 
\usetikzlibrary{decorations.markings} 
\tikzstyle{vertex}=[circle, draw, inner sep=0pt, minimum size=6pt] 

\usetikzlibrary{arrows,matrix} 



\newcommand{\RRR}{\mathcal{R}} 

\newcommand{\WWW}{\mathcal{W}}


\title{On $m$-step competition graphs of bipartite tournaments}

\author[1]{\small Soogang Eoh}
\author[1]{\small Suh-Ryung Kim}
\author[1]{\small Hyesun Yoon}
\affil[1]{\footnotesize Department of Mathematics Education, Seoul National University, Seoul 08826}
\affil[ ]{\footnotesize\textit{mathfish@snu.ac.kr, srkim@snu.ac.kr, magisakura@snu.ac.kr}}
\date{}
\begin{document}
\maketitle
\begin{abstract}
In this paper, we completely characterize the $m$-step competition graph of a bipartite tournament for any integer $m \ge 2$.
In addition, we compute the competition index and the competition period of a bipartite tournament.
\end{abstract}
\noindent
{\it Keywords.}
$m$-step competition graph, bipartite tournament, competition index, competition period, sink elimination index, sink sequence

\smallskip
\noindent
{{{\it 2010 Mathematics Subject Classification.} 05C20, 05C75}}

\section{Introduction}\label{intro}

Given a digraph $D$, the {\em competition graph} $C(D)$ of $D$
has the same vertex set as $D$ and has an edge between vertices $u$ and $v$
if and only if there exists a common prey of $u$ and $v$ in $D$. If $(u,v)$  is an arc of a digraph $D$,
then we call $v$ a {\em prey} of $u$ (in $D$) and call $u$ a {\em predator} of $v$ (in $D$).
The notion of competition graph is due to Cohen~\cite{cohen1968interval} and has arisen from ecology. Competition graphs also have applications in coding, radio transmission, and modeling of complex economic systems.  (See \cite{raychaudhuri1985generalized} and \cite{roberts1999competition} for a summary of these applications.)
Various variants of notion of competition graphs have been introduced and studied (see the survey articles by Kim~\cite{kim1993competition} and Lundgren~\cite{lundgren1989food} for the variations which have been defined and studied by many authors since Cohen introduced the notion of competition graph).

The notion of $m$-step competition graph is one of the important variants and is defined as follows.
Given a digraph $D$ and a positive integer $m$, a vertex $y$ is an {\em $m$-step prey} of a vertex $x$ if and only if there exists a directed walk from $x$ to $y$ of length $m$. Given a digraph $D$ and a positive integer $m$, the digraph $D^m$ has the vertex set same as $D$ and has an arc $(u,v)$ if and only if $v$ is an $m$-step prey of $u$.
Given a positive integer $m$, the {\em $m$-step competition graph} of a digraph $D$, denoted by $C^m(D)$, has the same vertex set as $D$ and has an edge between vertices $u$ and $v$ if and only if there exists an $m$-step common prey of $u$ and $v$. The notion of $m$-step competition graph is introduced by Cho~{\em et al.}~\cite{cho2000m} as a generalization of competition graph.
By definition, it is obvious that $C^1(D)$ for a digraph $D$ is the competition graph $C(D)$.
Since its introduction, it has been extensively studied (see for example \cite{belmont2011complete,cho2011competition,helleloid2005connected,ho2005m,kim2008competition,park2011m,zhao2009note}). Cho~{\em et al.}~\cite{cho2000m} showed that for any digraph $D$ and a positive integer $m$, $C^m(D)=C(D^m)$.

For the two-element Boolean algebra $\mathcal{B}=\{0,1\}$, $\mathcal{B}_n$ denotes the set of all $n \times n$ (Boolean) matrices over $\mathcal{B}$. Under the Boolean operations, we can define matrix
addition and multiplication in $\mathcal{B}_n$.
A graph $G$ is called the {\em row graph} of a matrix $A \in \mathcal{B}_n$ and denoted by $\RRR(A)$ if the rows of $A$ are the vertices of $G$, and two vertices are adjacent in $G$ if and only if their corresponding rows have a nonzero entry in the same column of $A$.
This notion was studied by Greenberg~{\em et al.}~\cite{greenberg1984inverting}. As noted in \cite{greenberg1984inverting}, the competition graph of a digraph $D$ is the row graph of its adjacency matrix.

Cho and Kim~\cite{cho2004competition} introduced the notions of competition index and competition period of $D$ for a strongly connected digraph $D$,
and Kim~\cite{kim2008competition} extended these notions to a general digraph $D$.
Consider the competition graph sequence $C^1(D)$, $C^2(D)$, $C^3(D)$, $\ldots$, $C^m(D)$, $\ldots$ for a digraph $D$.
(Note that for a digraph $D$ and its adjacency matrix $A$, the graph sequence $C^1(D)$, $C^2(D)$, $\ldots$, $C^m(D)$, $\ldots$ is equivalent to the row graph sequence $\mathcal{R}(A)$, $\mathcal{R}(A^2)$, $\ldots$, $\mathcal{R}(A^m)$, $\ldots$.)
Since the cardinality of the Boolean matrix set $\mathcal{B}_n$ is equal to a finite number $2^{n^2}$, there is a smallest positive integer $q$ such that $C^{q+i}(D)=C^{q+r+i}(D)$ (equivalently $\mathcal{R}(A^{q+i})=\mathcal{R}(A^{q+r+i})$) for some positive integer $r$ and all nonnegative integer $i$.
Such an integer $q$ is called the \emph{competition index} of $D$ and is denoted by cindex$(D)$.
For $q=$cindex$(D)$, there is also a smallest positive integer $p$ such that $C^{q}(D)=C^{q+p}(D)$ (equivalently $\mathcal{R}(A^{q})=\mathcal{R}(A^{q+p})$).
Such an integer $p$ is called the \emph{competition period} of $D$ and is denoted by cperiod$(D)$.

Given a graph $G$, let $S \subset V(G)$ be any nonempty subset of vertices of $G$.
The \emph{subgraph of $G$ induced by $S$}, denoted by $G[S]$, is the graph whose vertex set is $S$ and whose edge set consists of all of the edges in $E(G)$
that have both endpoints in $S$.
The same definition works for directed graphs.

In Section~\ref{chap:sink}, we introduce notions of sink elimination index and sink sequence of a digraph and present some useful properties of bipartite tournaments related to $m$-step competition graphs in terms of them.
In Section~\ref{chap:character}, we completely characterize the $m$-step competition graph of a bipartite tournament for any integer $m \ge 2$ and compute the competition index and the competition period of a bipartite tournament.

\section{The sink elimination index and the sink sequence of a digraph}\label{chap:sink}
Given a digraph $D$, we call a vertex of outdegree zero a \emph{sink} in $D$.

We define a nonnegative integer $\zeta(D)$ and sequences
\[(W_0, W_1, \ldots, W_{\zeta(D)}) \quad \mbox{and} \quad (D_0, D_1, \ldots D_{\zeta(D)}) \]
 of subsets of $V(D)$ and subdigraphs of $D$, respectively, as follows.
Let $D_0=D$ and $W_0$ be the set of sinks in $D$.
If $W_0 = V(D)$ or $W_0 = \emptyset$, then let $\zeta(D)=0$.
Otherwise, let $D_1 = D_0-W_0$ and let $W_1$ be the set of sinks in $D_1$.
If $W_1 = V(D_1)$ or $W_1 = \emptyset$, then let $\zeta(D)=1$.
Otherwise, let $D_2 = D_1-W_1$ and let $W_2$ be the set of sinks in $D_2$.
If $W_2 = V(D_2)$ or $W_2 = \emptyset$, then let $\zeta(D)=2$.
We continue in this way until we obtain $W_k=V(D_k)$ or $W_k =\emptyset$ for some nonnegative integer $k$.
Then we let $\zeta(D)=k$.
We call $\zeta(D)$ the \emph{sink elimination index} of $D$ and the sequence $(W_0, W_1, \ldots, W_{\zeta(D)})$ the \emph{sink sequence} of $D$ (see Figure~\ref{fig:sink} for illustration)
and the sequence $(D_0, D_1, \ldots, D_{\zeta(D)})$ the \emph{digraph sequence associated with the sink sequence}.

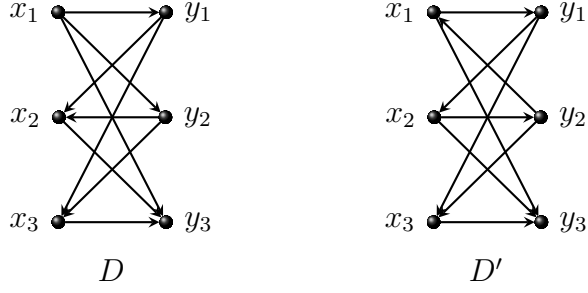
\begin{figure}
  \begin{center}
  \begin{tabular}{ccc}
    \begin{tikzpicture}[auto,thick,scale=0.7]
    \tikzstyle{player}=[minimum size=5pt,inner sep=0pt,outer sep=0pt,ball color=black,circle]
    \tikzstyle{source}=[minimum size=5pt,inner sep=0pt,outer sep=0pt,ball color=black, circle]
    \tikzstyle{arc}=[minimum size=5pt,inner sep=1pt,outer sep=1pt, font=\footnotesize]
    \path (117:2.23cm)  node [player, label=left:$x_1$]  (a) {};
    \path (180:1cm)     node [player, label=left:$x_2$]  (b) {};
    \path (243:2.23cm)  node [player, label=left:$x_3$]  (c) {};
    \path (63:2.23cm)   node [player, label=right:$y_1$]  (d) {};
    \path (0:1cm)       node [player, label=right:$y_2$]  (e){};
    \path (297:2.23cm)  node [player, label=right:$y_3$]  (f){};
   \draw[black,thick,-stealth] (a) - +(d);
   \draw[black,thick,-stealth] (a) - +(e);
   \draw[black,thick,-stealth] (a) - +(f);
   \draw[black,thick,-stealth] (d) - +(b);
   \draw[black,thick,-stealth] (e) - +(b);
   \draw[black,thick,-stealth] (b) - +(f);
   \draw[black,thick,-stealth] (d) - +(c);
   \draw[black,thick,-stealth] (e) - +(c);
   \draw[black,thick,-stealth] (c) - +(f);
    \end{tikzpicture}

& \phantom{ddddd} &

\begin{tikzpicture}[auto,thick,scale=0.7]
    \tikzstyle{player}=[minimum size=5pt,inner sep=0pt,outer sep=0pt,ball color=black,circle]
    \tikzstyle{source}=[minimum size=5pt,inner sep=0pt,outer sep=0pt,ball color=black, circle]
    \tikzstyle{arc}=[minimum size=5pt,inner sep=1pt,outer sep=1pt, font=\footnotesize]
    \path (117:2.23cm)  node [player, label=left:$x_1$]  (a) {};
    \path (180:1cm)     node [player, label=left:$x_2$]  (b) {};
    \path (243:2.23cm)  node [player, label=left:$x_3$]  (c) {};
    \path (63:2.23cm)   node [player, label=right:$y_1$]  (d) {};
    \path (0:1cm)       node [player, label=right:$y_2$]  (e){};
    \path (297:2.23cm)  node [player, label=right:$y_3$]  (f){};
   \draw[black,thick,-stealth] (a) - +(d);
   \draw[black,thick,-stealth] (e) - +(a);
   \draw[black,thick,-stealth] (a) - +(f);
   \draw[black,thick,-stealth] (d) - +(b);
   \draw[black,thick,-stealth] (b) - +(e);
   \draw[black,thick,-stealth] (b) - +(f);
   \draw[black,thick,-stealth] (d) - +(c);
   \draw[black,thick,-stealth] (e) - +(c);
   \draw[black,thick,-stealth] (c) - +(f);
    \end{tikzpicture}
\\
 $D$ & \phantom{ddd} & $D'$
\end{tabular}
\end{center}
\caption{$W_0 = \{y_3\}$, $W_1=\{x_2, x_3\}$, $W_2=\{y_1, y_2\}$,  $W_3= \{x_1\}$, and $W_4=\emptyset$ for $D$; $W_0=\{y_3\}$, $W_1=\{x_3\}$ and $V(D'_2)=\{x_1, x_2, y_1, y_2\}$ for $D'$. Thus $\zeta(D)=4$ and $\zeta(D')=2$.}\label{fig:sink}
\end{figure}

By definition, it is easy to see that $V(D_{\zeta(D)}) \neq \emptyset$ and $\bigcup_{i=0}^{\zeta(D)-1}{W_i} \cup V(D_{\zeta(D)}) = V(D)$ for a digraph $D$.
Therefore we have the following proposition.

\begin{Prop}\label{prop:union}
  For a digraph $D$, $W_{\zeta(D)}=V(D_{\zeta(D)})$ if and only if $\bigcup_{i=0}^{\zeta(D)}{W_i} = V(D)$.
\end{Prop}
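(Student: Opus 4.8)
The plan is to reduce both directions to the disjoint decomposition of $V(D)$ furnished by the construction, together with the trivial inclusion $W_{\zeta(D)} \subseteq V(D_{\zeta(D)})$ (by definition $W_{\zeta(D)}$ is a set of sinks of $D_{\zeta(D)}$, hence a subset of its vertex set). Writing $k = \zeta(D)$ for brevity, the starting point is the relation $V(D_{i+1}) = V(D_i) \setminus W_i$, which holds for every $0 \le i < k$ simply because $D_{i+1}$ is obtained from $D_i$ by deleting the vertices in $W_i$. In particular $W_i \cap V(D_{i+1}) = \emptyset$, and since $V(D_k) \subseteq V(D_{i+1})$ whenever $i+1 \le k$, we obtain $W_i \cap V(D_k) = \emptyset$ for all $i < k$. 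This disjointness is the one fact the whole argument turns on.

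For the forward direction, I would assume $W_k = V(D_k)$. Substituting this into the identity $\bigcup_{i=0}^{k-1} W_i \cup V(D_k) = V(D)$ stated just before the proposition immediately gives $\bigcup_{i=0}^{k} W_i = \bigcup_{i=0}^{k-1} W_i \cup W_k = \bigcup_{i=0}^{k-1} W_i \cup V(D_k) = V(D)$, as required.

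For the backward direction, I would assume $\bigcup_{i=0}^{k} W_i = V(D)$ and prove the inclusion $V(D_k) \subseteq W_k$, the reverse inclusion being the trivial one noted above. Take any $v \in V(D_k)$. By hypothesis $v \in \bigcup_{i=0}^{k} W_i$, so $v \in W_j$ for some $j \le k$; but the disjointness established in the first step rules out every $j < k$, leaving $v \in W_k$. Hence $V(D_k) \subseteq W_k$, and combined with $W_k \subseteq V(D_k)$ this yields $W_k = V(D_k)$.

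I expect no genuine obstacle here: the statement is essentially bookkeeping about a disjoint partition of $V(D)$ into the layers $W_0, W_1, \ldots, W_{k-1}$ and the surviving set $V(D_k)$. The only point requiring a little care is making the disjointness precise in the backward direction, since one must verify that a vertex surviving into $D_k$ cannot have been removed at an earlier stage; this is exactly what the relations $W_i \cap V(D_k) = \emptyset$ record.
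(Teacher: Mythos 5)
Your proof is correct and takes essentially the same approach as the paper, which offers no separate proof but presents the proposition as an immediate consequence of the identity $\bigcup_{i=0}^{\zeta(D)-1} W_i \cup V(D_{\zeta(D)}) = V(D)$ stated just before it. The only difference is that you make explicit the disjointness $W_i \cap V(D_{\zeta(D)}) = \emptyset$ for $i < \zeta(D)$ (needed for the backward direction), which the paper leaves implicit in the phrase ``by definition, it is easy to see.''
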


\begin{Prop}\label{prop:acyclic}
  A digraph $D$ is acyclic if and only if $W_{\zeta(D)} \neq \emptyset$.
\end{Prop}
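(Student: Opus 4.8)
The plan is to reduce everything to the terminal digraph $D_{\zeta(D)}$, which the excerpt already guarantees is nonempty, and to exploit the simple fact that deleting sinks neither creates nor destroys directed cycles.

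First I would record the elementary observation that no directed cycle can pass through a sink: a sink has outdegree zero, so it cannot be the tail of any arc and hence cannot occur on a closed directed walk. Consequently, for each $i$, every vertex of $W_i$ lies on no cycle of $D_i$, and deleting a vertex set that is disjoint from all cycles leaves every cycle of $D_i$ intact while creating none. Thus $D_i$ and $D_{i+1} = D_i - W_i$ have exactly the same directed cycles, and iterating from $i=0$ to $\zeta(D)-1$ shows that $D = D_0$ contains a directed cycle if and only if $D_{\zeta(D)}$ does. Equivalently, $D$ is acyclic if and only if $D_{\zeta(D)}$ is acyclic.

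Next I would analyze the two ways the process can terminate, using that $V(D_{\zeta(D)}) \neq \emptyset$. By definition $W_{\zeta(D)}$ equals either $V(D_{\zeta(D)})$ or $\emptyset$, so $W_{\zeta(D)} \neq \emptyset$ holds precisely when $W_{\zeta(D)} = V(D_{\zeta(D)})$, that is, when every vertex of $D_{\zeta(D)}$ is a sink. In that case $D_{\zeta(D)}$ has no arcs at all, so it is trivially acyclic, and the previous paragraph then yields that $D$ is acyclic. This establishes the implication ``$W_{\zeta(D)} \neq \emptyset$ implies $D$ is acyclic.''

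For the converse I would invoke the standard fact that a finite digraph with no sink contains a directed cycle: starting at any vertex and repeatedly following an outgoing arc (always possible since there is no sink) produces an arbitrarily long walk in a finite vertex set, so some vertex repeats and a cycle appears. When $W_{\zeta(D)} = \emptyset$, the digraph $D_{\zeta(D)}$ is nonempty with no sink, hence contains a directed cycle, so $D$ is not acyclic; taking the contrapositive shows that $D$ acyclic forces $W_{\zeta(D)} \neq \emptyset$. The only mildly delicate step is justifying the cycle-invariance under sink deletion in the second paragraph, but once that is in place the rest is routine.
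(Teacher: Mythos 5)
Your proof is correct and takes essentially the same route as the paper's: your cycle-invariance lemma (deleting sinks neither creates nor destroys directed cycles) is the same observation the paper makes when it notes that no vertex of a directed cycle can belong to any $W_i$ and hence survives into $D_{\zeta(D)}$, and both arguments close with the standard fact that a nonempty finite digraph without sinks contains a directed cycle. The only cosmetic difference is that you package the reduction to $D_{\zeta(D)}$ as a standalone lemma and handle the forward implication via the arcless all-sink case, while the paper argues the two directions directly.
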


\begin{proof}
   We note that $W_{\zeta(D)} = \emptyset$ if and only if $D_{\zeta(D)}$ has no sinks if and only if $D_{\zeta(D)}$ has a directed cycle.
   Therefore, if $W_{\zeta(D)} = \emptyset$, then $D_{\zeta(D)}$ has a directed cycle and so $D$ has a directed cycle.
   To show the converse, suppose that $D$ has a directed cycle $C$.
    Then any vertex on $C$ cannot belong to $W_i$ for any $i = 0, \ldots, \zeta(D)$, so the vertices of $C$ belong to $V(D_{\zeta(D)})$.
    Thus $W_{\zeta(D)} \neq V(D_{\zeta(D)})$ and hence $W_{\zeta(D)}=\emptyset$ by definition.
\end{proof}
Let $D$ be a bipartite tournament with bipartition $(V_1,V_2)$ and $\zeta(D) \ge 1$, and let $(W_0, \ldots, W_{\zeta(D)})$ and $(D_0, D_1, \ldots, D_{\zeta(D)})$ be the sink sequence and the digraph sequence associated with the sink sequence, respectively, of $D$. Fix $j \in\{0, \ldots, \zeta(D)\}$.
By definition, $D_j$ is a bipartite tournament and $W_j$ is the set of sinks in $D_j$. Therefore $W_j$ is included in exactly one of partite sets of $D_j$. Since the partite sets of $D_j$ are included in $V_1$ and $V_2$, respectively, $W_j$
is included in exactly one of $V_1$ and $V_2$.

Now we state the following proposition.

\begin{Prop}\label{lem:oddeven}
  Let $D$ be a bipartite tournament with bipartition $(V_1,V_2)$. In addition, let $(W_0, \ldots, W_{\zeta(D)})$ be the sink sequence of $D$ with $\zeta(D) \ge 1$.
  Then, $\bigcup_{0 \le i \le {\zeta(D)}/2}{W_{2i}}$ is included in one of the bipartite sets while $\bigcup_{0 \le i \le ({\zeta(D)}-1)/2}{W_{2i+1}}$ is included in the other partite set.
  Furthermore, $D$ is acyclic if and only if $\bigcup_{0 \le i \le {\zeta(D)}/2}{W_{2i}}$ and $\bigcup_{0 \le i \le ({\zeta(D)}-1)/2}{W_{2i+1}}$ themselves are the bipartite sets.
 \end{Prop}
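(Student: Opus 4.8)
The heart of the argument is an alternation property: consecutive terms $W_j$ and $W_{j+1}$ of the sink sequence lie in different partite sets. The plan is to establish this first, then deduce the first assertion by a one-line induction, and finally read off the ``Furthermore'' part from Propositions~\ref{prop:union} and~\ref{prop:acyclic} together with the disjointness of the $W_i$.

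For the alternation step I would fix $j$ with $0 \le j \le \zeta(D)-1$, so that $W_j$ is a \emph{nonempty} set of sinks of the bipartite tournament $D_j$ and hence, as already observed in the paragraph preceding the proposition, lies in exactly one partite set, say the one contained in $V_1$; call the other part $Q$, so that $Q \subseteq V_2$. Since $W_j \subseteq V_1$, passing from $D_j$ to $D_{j+1}=D_j-W_j$ removes no vertex of $Q$. Now I would take any vertex $v$ of $D_{j+1}$ lying in $V_1$: as $v \notin W_j$, it was not a sink of $D_j$, so it has an out-neighbor $w$ in $D_j$, and because $D_j$ is bipartite, $w$ lies in $Q$. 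Thus $w$ survives in $D_{j+1}$ and the arc from $v$ to $w$ remains, so $v$ is not a sink of $D_{j+1}$. Consequently every sink of $D_{j+1}$ lies in $Q$, i.e. $W_{j+1} \subseteq V_2$; exchanging the roles of the two parts shows $W_j \subseteq V_2$ forces $W_{j+1} \subseteq V_1$. (When $j+1=\zeta(D)$ and $W_{\zeta(D)}=\emptyset$ the containment is trivial.) A straightforward induction from $W_0$ then yields that all even-indexed $W_{2i}$ lie in one part and all odd-indexed $W_{2i+1}$ in the other, which is the first assertion.

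For the second assertion I would use that the sets $W_0,\ldots,W_{\zeta(D)}$ are pairwise disjoint and that $U_{\mathrm{even}} := \bigcup_{0 \le i \le \zeta(D)/2} W_{2i}$ and $U_{\mathrm{odd}} := \bigcup_{0 \le i \le (\zeta(D)-1)/2} W_{2i+1}$ together exhaust $\bigcup_{i=0}^{\zeta(D)} W_i$. If $D$ is acyclic, then $W_{\zeta(D)} \neq \emptyset$ by Proposition~\ref{prop:acyclic}, and since $W_{\zeta(D)}$ is by construction either empty or all of $V(D_{\zeta(D)})$, we get $W_{\zeta(D)}=V(D_{\zeta(D)})$, whence $\bigcup_{i=0}^{\zeta(D)} W_i = V(D)$ by Proposition~\ref{prop:union}. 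Intersecting $U_{\mathrm{even}} \cup U_{\mathrm{odd}} = V(D) = V_1 \cup V_2$ with each part, and using that $U_{\mathrm{even}}$ and $U_{\mathrm{odd}}$ sit in different (hence disjoint) parts, forces $U_{\mathrm{even}}$ and $U_{\mathrm{odd}}$ to be exactly $V_1$ and $V_2$. Conversely, if $U_{\mathrm{even}}$ and $U_{\mathrm{odd}}$ are the two partite sets, then $\bigcup_{i=0}^{\zeta(D)} W_i = V(D)$, so $W_{\zeta(D)}=V(D_{\zeta(D)}) \neq \emptyset$ by Proposition~\ref{prop:union}, and $D$ is acyclic by Proposition~\ref{prop:acyclic}.

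The only genuine obstacle is the alternation step; the key insight there is that deleting a set of sinks contained in one part cannot create any new sink in that same part, because every remaining vertex of that part retains an out-neighbor in the untouched opposite part. Everything else is bookkeeping with disjoint unions combined with the two preceding propositions.
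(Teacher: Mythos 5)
Your proposal is correct and follows essentially the same route as the paper: the alternation of consecutive $W_j$, $W_{j+1}$ between the two partite sets, followed by the ``furthermore'' part via Propositions~\ref{prop:union} and~\ref{prop:acyclic} (the paper phrases this as a chain of equivalences, you as two implications, which is the same content). Your only departure is that you spell out \emph{why} $W_j \subset V_1$ forces $W_{j+1} \subset V_2$ --- deleting sinks from one part cannot create sinks in that part, since each surviving vertex keeps an out-neighbor in the untouched opposite part --- a detail the paper compresses into one sentence, so if anything your write-up is more complete at that step.
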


\begin{proof}
Let $(D_0, D_1, \ldots, D_{\zeta(D)})$ be the digraph sequence associated with $(W_0, \ldots, W_{\zeta(D)})$.
Since $\zeta(D) \ge 1$, $\emptyset \subsetneq W_0 \subsetneq V(D)$.
Suppose that $W_j \subset V_1$ for some $j \in \{0, \ldots, \zeta(D)-1\}$.
Since $D$ is a bipartite tournament, $D_j$ is a bipartite tournament and so $W_{j+1} \subset V_2$.
Similarly, if $W_j \subset V_2$ for some $j \in \{0, \ldots, \zeta(D)-1\}$, then $W_{j+1} \subset V_1$.
Thus we have shown that $\bigcup_{0 \le i \le {\zeta(D)}/2}{W_{2i}}$ is included in one of the bipartite sets while $\bigcup_{0 \le i \le ({\zeta(D)}-1)/2}{W_{2i+1}}$ is included in the other partite set.

The ``furthermore'' part may be justified as follows.
Without loss of generality, we may assume that
\begin{equation}\label{partite}
\bigcup_{0 \le i \le ({\zeta(D)}-1)/2}{W_{2i+1}} \subset V_1 \mbox{ and } \bigcup_{0 \le i \le {\zeta(D)}/2}{W_{2i}} \subset V_2.
\end{equation}
Now,
\begin{tabbing}
  \phantom{dd} \= $D$ is acyclic \hskip3.7in \= \\
  $\Leftrightarrow$ \> $W_{\zeta(D)} \neq \emptyset$ \> (by Proposition~\ref{prop:acyclic}) \\
  $\Leftrightarrow$ \> $V(D_{\zeta(D)})=W_{\zeta(D)}$ \> (by definition) \\
  $\Leftrightarrow$ \> $\bigcup_{i=0}^{\zeta(D)}W_i = V(D)$ \> (by Proposition~\ref{prop:union}) \\
  $\Leftrightarrow$ \> $\bigcup_{0 \le i \le ({\zeta(D)}-1)/2}{W_{2i+1}}=V_1$ and $\bigcup_{0 \le i \le {\zeta(D)}/2}{W_{2i}}=V_2$ \> (by \eqref{partite}).
\end{tabbing}
\end{proof}

\begin{Prop}\label{Lem:noprey}
  Let $D$ be a bipartite tournament with $\zeta(D) \ge 1$ and $(W_0, \ldots, W_{\zeta(D)})$ be the sink sequence of $D$.
  Then any directed walk with an initial vertex in $W_i$ has length at most $i$ in $D$ for $i=0, \ldots, \zeta(D)-1$.
  Furthermore, if $D$ is acyclic, then even a directed walk with an initial vertex in $W_{\zeta(D)}$ has length at most $\zeta(D)$ in $D$.
\end{Prop}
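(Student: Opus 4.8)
The plan is to prove the length bound by induction on the index $i$, resting on a single structural observation which I would isolate first: if $v \in W_i$, then every out-neighbor of $v$ in $D$ must lie in $\bigcup_{j=0}^{i-1} W_j$. Indeed, $v \in W_i$ means $v$ is a sink in $D_i = D - (W_0 \cup \cdots \cup W_{i-1})$, so no arc leaving $v$ survives the deletion of $W_0, \ldots, W_{i-1}$; since $v$ itself is not deleted, each such arc must have had its head deleted, i.e. its head lies in some $W_j$ with $j \le i-1$. This ``one-step descent'' is the engine of the whole argument.

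For the first assertion I would induct on $i$ ranging over $0, \ldots, \zeta(D)-1$. The base case $i = 0$ is immediate: a vertex of $W_0$ is a sink in $D$, so it has no out-arc and the only directed walk issuing from it has length $0$. For the inductive step, fix $v_0 \in W_i$ with $1 \le i \le \zeta(D)-1$ and a directed walk $(v_0, v_1, \ldots, v_\ell)$ with $\ell \ge 1$. By the descent observation, $v_1 \in W_j$ for some $j \le i-1$. The truncated walk $(v_1, \ldots, v_\ell)$ starts in $W_j$ and has length $\ell - 1$; since $j \le i - 1 \le \zeta(D)-2$, the induction hypothesis applies and gives $\ell - 1 \le j \le i - 1$, whence $\ell \le i$, as desired.

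For the ``furthermore'' part I would first invoke Proposition~\ref{prop:acyclic}: when $D$ is acyclic, $W_{\zeta(D)} \neq \emptyset$, and then the stopping rule defining $\zeta(D)$ forces $W_{\zeta(D)} = V(D_{\zeta(D)})$, so every vertex remaining in $D_{\zeta(D)}$ is a sink there. Consequently the same descent observation holds at the top level: any out-neighbor of $v_0 \in W_{\zeta(D)}$ lies in some $W_j$ with $j \le \zeta(D) - 1$, and applying the already-established first assertion to the truncated walk bounds its length by $\zeta(D)$. The only real subtlety, and the step I expect to require the most care, is precisely this boundary index $i = \zeta(D)$: for a general (possibly cyclic) bipartite tournament the vertices of $V(D_{\zeta(D)})$ may lie on directed cycles and admit arbitrarily long walks, so the bound genuinely needs acyclicity to license the last descent step; everything below the top level, by contrast, requires no such hypothesis.
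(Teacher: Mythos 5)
Your proof is correct and runs on the same engine as the paper's: the observation that a vertex of $W_i$ is a sink in $D_i$, so every arc it sends out must land in $\bigcup_{j<i}W_j$, forcing the sink-sequence index to drop strictly along each arc of the walk --- you package this as a one-step descent lemma plus induction on $i$, while the paper applies the identical descent fact directly along the walk (first ruling out walk vertices in $V(D_{\zeta(D)})$ via a first-offender argument), so the substance is the same. One small quibble with your closing remark: acyclicity is not what ``licenses the last descent step,'' since vertices of $W_{\zeta(D)}$ are sinks of $D_{\zeta(D)}$ by the very definition of the sink sequence and descent from them is automatic; acyclicity enters only through Proposition~\ref{prop:acyclic} to guarantee $W_{\zeta(D)}\neq\emptyset$, i.e.\ to make the ``furthermore'' claim non-vacuous (your intermediate invocation of $W_{\zeta(D)}=V(D_{\zeta(D)})$ is likewise harmless but unnecessary).
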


\begin{proof}
Fix $i \in \{0,\ldots, \zeta(D)-1\}$ and take a directed walk $W$ in $D$ with an initial vertex $v_i$ in $W_i$.
Let $\alpha$ be the length of $W$.
Then there are $\alpha$ terms after $v_i$ in the sequence of vertices on $W$.
Suppose to the contrary that there exists a vertex $w$ on $W$ that is distinct from $v_i$ and belongs to $V(D_{\zeta(D)})$.
Without loss of generality, we may assume that $w$ is the first vertex on $W$ that belongs to $V(D_{\zeta(D)})$.
Then the vertex right before $w$ on $W$ belongs to $W_t$ for some $t \in \{0,1,\ldots , \zeta(D)\}$, which contradicts the definition of sink sequence.
Therefore each vertex on $W$ belongs to $W_j$ for some $j \in \{0,1,\ldots , \zeta(D)\}$.
Now suppose that there exist two consecutive vertices $w_1$ and $w_2$ on $W$ such that $(w_1,w_2)$ is an arc on $W$ and $w_1 \in W_j$ and $w_2 \in W_k$ for some positive integers $j$ and $k$ satisfying $j \le k$.
Then $(w_1,w_2)$ belongs to $D_j$ by the definition of digraph sequence associated with $(W_0, \ldots, W_{\zeta(D)})$.
However, the vertices in $W_j$ have outdegree zero in $D_j$ and we reach a contradiction.
Therefore, if $(x,y)$ is an arc on $W$ for some vertices $x$ and $y$ on $W$, then $x \in W_j$ and $y \in W_k$ for some positive integers $j$ and $k$ satisfying $j-k \ge 1$.
Therefore, if the terminus belongs to $W_{l}$ for some nonnegative integer $l$, then $i-l \ge \alpha$.
Since $l \ge 0$, we have $\alpha \le i$ and so the first part of the lemma statement is valid.

If $D$ is acyclic, then, by Proposition~\ref{prop:acyclic}, $W_{\zeta(D)} \neq \emptyset$.
We may take a directed walk with an initial vertex in $W_{\zeta(D)}$.
Then, by the above argument, the length of the walk is at most $\zeta(D)$.
\end{proof}

\begin{Prop}\label{prop:clique}
  Let $D$ be a bipartite tournament with  bipartition $(V_1,V_2)$.
  In addition, let $\zeta(D) \ge 1$ and $(W_0, \ldots, W_{\zeta(D)})$ be the sink sequence of $D$, and $\WWW_k=\bigcup_{i=0}^{k-1}W_i$ for $1 \le k \le \zeta(D)+1$.
  Then the following are true:
  \begin{itemize}
  \item[(1)] A vertex in $W_{2k'+1}$ is an out-neighbor of each vertex in $V_2 \setminus \WWW_{2k'+1}$ in $D$ for $k' =0, \ldots, \left\lfloor {{\zeta(D)} / {2} }  \right\rfloor -1$.
  \item[(2)] A vertex in $W_{2k'}$ is an out-neighbor of each vertex in $V_1 \setminus \WWW_{2k'}$ in $D$ for $k' =1, \ldots, \left\lceil {{\zeta(D)} / {2} }  \right\rceil -1$.
  \item[(3)] If $v_i \in W_i$ for each $i=0, \ldots, l$ ($1 \le l < \zeta(D)$), then there exists a directed path $v_l \rightarrow v_{l-1} \rightarrow \cdots \rightarrow v_1 \rightarrow v_0$. Furthermore, if $D$ is acyclic, then there is an arc from any vertex in $W_{\zeta(D)}$ to any vertex in $W_{\zeta(D)-1}$.
  \end{itemize}
\end{Prop}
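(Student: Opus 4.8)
The plan is to reduce all three parts to a single observation about bipartite tournaments. Recall that $D_j = D - \WWW_j$ for each $j$, so $V(D_j) = V(D)\setminus\WWW_j$, that $D_j$ is again a bipartite tournament, and that $W_j$ is exactly its set of sinks. I would first isolate the following principle: if $w \in W_j$ and $u$ is a vertex lying in the partite set opposite to the one containing $w$ with $u \notin \WWW_j$ (equivalently $u \in V(D_j)$), then the unique arc of $D$ joining $u$ and $w$ must be $u \to w$. Indeed, both $u$ and $w$ survive into $D_j$, so that arc is present in $D_j$; since $w$ is a sink of $D_j$, it cannot be the tail, forcing $u \to w$. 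Throughout I would adopt the labeling used in the proof of Proposition~\ref{lem:oddeven}, namely $\bigcup_i W_{2i+1}\subset V_1$ and $\bigcup_i W_{2i}\subset V_2$, so that consecutive sink sets lie in opposite partite sets.

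Parts (1) and (2) then follow at once. For (1), fix $k'$ in the stated range; since $2k'+1 \le \zeta(D)-1$, the set $W_{2k'+1}$ is a genuine (nonempty, proper) sink set of $D_{2k'+1}$, and $W_{2k'+1}\subset V_1$. Any $u \in V_2\setminus\WWW_{2k'+1}$ lies in $V(D_{2k'+1}) = V(D)\setminus\WWW_{2k'+1}$ and sits in the partite set opposite to $w \in W_{2k'+1}$, so the principle gives $u \to w$. Part (2) is the mirror image: for $k'$ in its range one checks $2k' \le \zeta(D)-1$, so $W_{2k'}\subset V_2$ is the sink set of $D_{2k'}$, and every $u \in V_1\setminus\WWW_{2k'}$ lies in $V(D_{2k'})$ and points into $W_{2k'}$.

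For part (3) I would build the path arc by arc. For each $i \in \{1,\dots,l\}$, apply the principle inside $D_{i-1}$: here $v_{i-1}\in W_{i-1}$ is a sink of $D_{i-1}$, while $v_i \in W_i \subseteq V(D_i) \subseteq V(D_{i-1})$ lies in the opposite partite set, so $v_i \to v_{i-1}$ is an arc of $D$. Because the $v_i$ belong to the pairwise disjoint sets $W_0,\dots,W_l$, they are distinct, so $v_l \to v_{l-1}\to\cdots\to v_0$ is an honest directed path. For the acyclic addendum, Proposition~\ref{prop:acyclic} gives $W_{\zeta(D)}=V(D_{\zeta(D)})\neq\emptyset$; running the same argument in $D_{\zeta(D)-1}$, where $W_{\zeta(D)-1}$ is the sink set and $W_{\zeta(D)}\subseteq V(D_{\zeta(D)-1})$ lies in the opposite partite set, yields an arc from each vertex of $W_{\zeta(D)}$ to each vertex of $W_{\zeta(D)-1}$.

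The mathematical content is thus a single repeated argument, so I expect no genuinely hard step; the only real care is the bookkeeping. The step that will require the most attention is verifying that the index ranges keep each relevant $W_j$ a bona fide sink set of a well-defined $D_j$ — that is, $2k'+1\le\zeta(D)-1$ in (1) and $2k'\le\zeta(D)-1$ in (2), with the acyclicity hypothesis covering the boundary case $j=\zeta(D)$ in (3) — together with confirming that the surviving vertices genuinely lie in $V(D_j)$ and that the parity convention places $w$ and $u$ in opposite partite sets.
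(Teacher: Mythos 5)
Your proposal is correct and follows essentially the same route as the paper: the core observation in both is that $W_j$ is the sink set of the bipartite tournament $D_j$ on $V(D)\setminus \WWW_j$, so every surviving vertex in the opposite partite set must send its arc into $W_j$, which yields (1) and (2) directly and gives (3) by chaining arcs between consecutive (hence oppositely placed) sink sets. Your only departures are cosmetic --- you isolate this as an explicit ``principle'' and build the path in (3) from consecutive indices, whereas the paper fixes representatives $v_i \in W_i$ and derives (3) from arcs between $W_t$ and $W_s$ whenever $t-s$ is odd --- and your index-range checks ($2k'+1 \le \zeta(D)-1$, $2k' \le \zeta(D)-1$, and the acyclicity hypothesis handling $W_{\zeta(D)}$) are exactly the bookkeeping the paper's argument implicitly relies on.
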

\begin{proof}
Since $\zeta(D) \ge 1$, $W_0 \neq \emptyset$.
Since $D$ is a bipartite tournament, $W_0 \subset V_1$ or $W_0 \subset V_2$.
Without loss of generality, we may assume $W_0 \subset V_2$.
Then, by Proposition~\ref{lem:oddeven}, $W_i \subset V_1$ for each odd integer $0 \le i \le \zeta(D)$ and $W_i \subset V_2$ for each even integer $0 \le i \le \zeta(D)$.
Take a vertex $v_i \in W_i$ for each $i=0, \ldots, \zeta(D)-1$.
If $D$ is acyclic, then $W_{\zeta(D)} \neq \emptyset$ by Proposition~\ref{prop:acyclic} and we may take $v_{\zeta(D)} \in W_{\zeta(D)}$.
Fix $k' \in \left\{0, \ldots, \left\lfloor {{\zeta(D)} / {2} }  \right\rfloor -1 \right\}$.
Then, since $D$ is a bipartite tournament, $v_{2k'+1}$ is an out-neighbor of each vertex in $V_2 \setminus \WWW_{2k'+1}$ in $D$.
Therefore there exists an arc from $v_{2k'+2j+2}$ to $v_{2k'+1}$ for each nonnegative integer $j$ with $2k'+2j+2 \le \zeta(D)$.
Similarly, for each $k'= 1, \ldots, \left\lceil {{\zeta(D)} / {2} }  \right\rceil -1$, $v_{2k'}$ is an out-neighbor of each vertex in $V_1 \setminus \WWW_{2k'}$ in $D$, so there exists an arc from $v_{2k'+2j+1}$ to $v_{2k'}$ for each nonnegative integer $j$ with $2k'+2j+1 \le \zeta(D)$.
Since $v_i$ was arbitrarily chosen in $W_i$, we have shown (1) and (2).
We also have shown the following:
\begin{itemize}
  \item there exists an arc from each vertex in $W_t$ to each vertex in $W_s$ for positive integers $0 \le s < t < \zeta(D)$ whenever $t-s$ is an odd integer;
  \item if $D$ is acyclic, there exists an arc from each vertex in $W_{\zeta(D)}$ to each vertex in $W_s$ for positive integer $0 \le s < \zeta(D)$ whenever $\zeta(D)-s$ is an odd integer.
\end{itemize}

The statement (3) immediately follows from these two facts.
\end{proof}

\section{A characterization of the $m$-step competition graph of a bipartite tournament}\label{chap:character}

In this section, we completely characterize the $m$-step competition graph of a bipartite tournament for any integer $m \ge 2$.
In addition, we compute the competition index and the competition period of a bipartite tournament.

We first present fundamental properties of $m$-step competition graphs of bipartite tournaments.
 
\begin{Prop}\label{prop:noedge}
For a bipartite tournament $D$ with bipartition $(V_1,V_2)$, there is no edge joining a vertex in $V_1$ and a vertex in $V_2$ in $C^m(D)$ for any positive integer $m$.
\end{Prop}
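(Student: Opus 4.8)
The plan is to exploit the bipartite structure of $D$: since every arc of a bipartite tournament has one endpoint in $V_1$ and one in $V_2$ (there are no arcs within a single partite set), any directed walk must alternate between $V_1$ and $V_2$. Hence the partite set containing the terminus of a directed walk is determined entirely by the partite set of its initial vertex together with the parity of its length.

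First I would make this alternation precise by a short induction on the walk length. If $w$ is the terminus of a directed walk of length $m$ starting at a vertex of $V_1$, then $w \in V_1$ when $m$ is even and $w \in V_2$ when $m$ is odd; symmetrically, a walk of length $m$ starting in $V_2$ ends in $V_2$ when $m$ is even and in $V_1$ when $m$ is odd. The base case $m=1$ is just the fact that each arc crosses between the two partite sets, and the inductive step appends one more crossing arc, flipping the partite set. Equivalently, one may phrase this as: every $m$-step prey of a vertex of $V_1$ lies in a single partite set determined by the parity of $m$, and likewise for $V_2$.

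Next I would argue by contradiction. Suppose there is an edge of $C^m(D)$ joining some $u \in V_1$ and $v \in V_2$. By definition of the $m$-step competition graph, $u$ and $v$ then share a common $m$-step prey $w$, so there are directed walks of length $m$ from $u$ to $w$ and from $v$ to $w$. Applying the parity observation to the walk from $u \in V_1$ places $w$ in $V_1$ or $V_2$ according to whether $m$ is even or odd; applying it to the walk from $v \in V_2$ places $w$ in the opposite partite set in each case. Since $V_1 \cap V_2 = \emptyset$, no such $w$ can exist, a contradiction.

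There is no serious obstacle here; the only point requiring care is the alternation claim, which is the crux of the argument and handles both parities of $m$ uniformly. Everything else is a direct consequence of the definition of an $m$-step common prey together with the disjointness of $V_1$ and $V_2$.
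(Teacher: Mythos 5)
Your proposal is correct and follows essentially the same route as the paper's proof: both rest on the observation that directed walks in a bipartite tournament alternate between $V_1$ and $V_2$, so the partite set of an $m$-step prey is determined by the starting side and the parity of $m$, making a common $m$-step prey of a vertex in $V_1$ and a vertex in $V_2$ impossible. The paper simply states these parity facts directly, whereas you spell out the alternation by induction, which is a harmless elaboration rather than a different approach.
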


\begin{proof}
  For a vertex in $V_1$, a vertex in $V_1$ can be only $2k$-step prey for a positive integer $k$ and a vertex in $V_2$ can be only
  $(2k'-1)$-step prey for a positive integer $k'$
   while, for a vertex in $V_2$, a vertex in $V_1$ can be only $(2l-1)$-step prey for a positive integer $l$ and a vertex in $V_2$ can be only
  $2l'$-step prey for a positive integer $l'$.
  Therefore a vertex in $V_1$ and a vertex in $V_2$ cannot have an $m$-step common prey for any positive integer $m$.
\end{proof}

\begin{Prop}\label{prop:existence}
Let $D$ be a bipartite tournament with bipartition $(V_1,V_2)$.
If $C^M(D)$ has an edge for a positive integer $M$, then so does $C^m(D)$ for any positive integer $m \le M$.
\end{Prop}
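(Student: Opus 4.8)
The plan is to reduce the statement to a single downward step and then induct. Concretely, I will first establish the following: if $M \ge 2$ and $C^M(D)$ contains an edge, then $C^{M-1}(D)$ also contains an edge. Granting this, a downward induction on $M$ immediately yields an edge in $C^m(D)$ for every $m$ with $1 \le m \le M$, which is exactly the assertion (the base case $m = M$ being the hypothesis).

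To carry out the induction step, I will unwind the definition of an edge in $C^M(D)$: there are two distinct vertices $a$ and $b$ sharing an $M$-step common prey $w$, so there exist directed walks $a = a_0 \to a_1 \to \cdots \to a_M = w$ and $b = b_0 \to b_1 \to \cdots \to b_M = w$, each of length $M$, with $a_0 \ne b_0$. The goal is to produce two distinct vertices with a common $(M-1)$-step prey. The natural move is to ``shift'' the walks forward by one step and read off the vertices at position $1$; the only thing that can go wrong is that these shifted vertices coincide, so I will split into two cases according to whether $a_1 = b_1$.

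In the case $a_1 \ne b_1$, the tails $a_1 \to \cdots \to a_M = w$ and $b_1 \to \cdots \to b_M = w$ are two walks of length $M-1$ ending at the common vertex $w$ whose initial vertices are distinct; hence $w$ is an $(M-1)$-step common prey of $a_1$ and $b_1$, giving the desired edge. In the case $a_1 = b_1 =: z$, shifting would fail, so instead I will keep the original starting vertices $a \ne b$ and reroute one walk: since $b \to z = a_1$ is an arc, the walk $b \to a_1 \to a_2 \to \cdots \to a_{M-1}$ has length $M-1$ and terminates at $a_{M-1}$, exactly as does the length-$(M-1)$ prefix $a \to a_1 \to \cdots \to a_{M-1}$ of the first walk. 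Thus $a_{M-1}$ is an $(M-1)$-step common prey of the distinct vertices $a$ and $b$, again producing an edge in $C^{M-1}(D)$. (For $M=2$ this degenerates to $a_{M-1}=z$, the common $1$-step prey of $a$ and $b$.)

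The step I expect to require the most care is precisely this distinctness bookkeeping in the case $a_1 = b_1$: the reason the statement is true is that even when the two length-$M$ walks merge at their first step, the two distinct sources $a$ and $b$ can still be routed to a single vertex one step short of $w$, so the edge survives under shortening. It is worth noting that this argument never invokes the bipartite-tournament hypothesis and in fact holds for an arbitrary digraph; the hypothesis is retained only to match the setting of the surrounding results.
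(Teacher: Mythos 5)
Your proof is correct and takes essentially the same route as the paper's: the same case split on whether the two walks' second vertices coincide, with the same two remedies (pass to $x_1\ne y_1$ with $z$ as their common $(M-1)$-step prey, or reroute the second walk through the shared vertex so that the vertex one step before $z$ becomes a common $(M-1)$-step prey of the original pair), followed by downward iteration. Your closing observation is also accurate — the paper invokes Proposition~\ref{prop:noedge} only to place the two endpoints in the same partite set, which plays no essential role, so the argument indeed works in any digraph.
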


\begin{proof}
  If $M=1$, then $m=1$ and the statement is trivially true.
  Suppose $M \ge 2$.
  Let $xy$ be an edge in $C^M(D)$.
  Then $x$ and $y$ belong to the same part by Proposition~\ref{prop:noedge}.
  Without loss of generality, we may assume that $x$ and $y$ belong to $V_1$.
  In addition, $x$ and $y$ have an $M$-step common prey $z$ in $D$.
  Then there exist a directed $(x,z)$-walk $P$ and a directed $(y,z)$-walk $Q$ of length $M$ in $D$.
  Let $x_1$ and $y_1$ be the vertices in $D$ such that $(x,x_1)$ and $(y,y_1)$ are the arcs on $P$ and $Q$, respectively.
  If $x_1$ and $y_1$ are distinct,
  then $z$ is an $(M-1)$-step common prey of $x_1$ and $y_1$ and so $x_1$ and $y_1$ are adjacent in $C^{(M-1)}(D)$.
  If $x_1$ and $y_1$ are the same,
  then the vertex immediately following $z$ on $P$ is an $(M-1)$-step common prey of $x$ and $y$ and so $x$ and $y$ are adjacent in $C^{(M-1)}(D)$.
  Therefore there is an edge in $C^{(M-1)}(D)$.
  If $M \ge 3$, then we may repeat this argument to show that there is an edge in $C^{(M-2)}(D)$.
  In this way, we may show that there is an edge in $C^m(D)$ for any positive integer $m \le M$.
\end{proof}

The following corollary is the contrapositive of Proposition~\ref{prop:existence}.

\begin{Cor}\label{cor:existence}
Let $D$ be a bipartite tournament with bipartition $(V_1,V_2)$.
If $C^M(D)$ is an edgeless graph for a positive integer $M$, then so does $C^m(D)$ for any positive integer $m \ge M$.
\end{Cor}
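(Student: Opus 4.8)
The plan is to derive the statement directly by contraposition from Proposition~\ref{prop:existence}, since the two assertions are logically equivalent. I would fix a positive integer $M$ for which $C^M(D)$ is edgeless, together with an arbitrary positive integer $m \ge M$, and aim to show that $C^m(D)$ is edgeless as well.

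The argument I would run is a short proof by contradiction. Suppose instead that $C^m(D)$ contains an edge. Then I apply Proposition~\ref{prop:existence} with the positive integer $m$ playing the role of the ``$M$'' appearing in that statement: the hypothesis of the proposition is met because $C^m(D)$ has an edge, and since $M$ is a positive integer with $M \le m$, its conclusion yields that $C^M(D)$ has an edge. This contradicts the standing assumption that $C^M(D)$ is edgeless. Hence $C^m(D)$ can have no edge, which is exactly what was to be shown.

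The only point requiring care is to match the quantifiers correctly when invoking Proposition~\ref{prop:existence}: that proposition fixes an upper index and propagates the existence of an edge \emph{downward} to all smaller indices, so to recover the present statement I must instantiate it at the larger index $m$ and read off the conclusion at the smaller index $M$. Beyond this bookkeeping there is no genuine obstacle, as the corollary is precisely the contrapositive of the proposition already established.
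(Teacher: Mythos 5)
Your proposal is correct and takes exactly the paper's approach: the paper gives no separate proof, noting only that the corollary is the contrapositive of Proposition~\ref{prop:existence}, which is precisely your argument (instantiating the proposition at the larger index $m$ and reading off the conclusion at $M \le m$). Your care with the quantifier bookkeeping is exactly the one point that matters here, and it is handled correctly.
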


\begin{Prop}\label{prop:adjacent}
 Let $D$ be a bipartite tournament with no sinks.
 If two vertices are adjacent in $C^M(D)$ for a positive integer $M$, then they are also adjacent in $C^m(D)$ for any positive integer $m \ge M$.
\end{Prop}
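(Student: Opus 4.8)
The plan is to exploit the single standing hypothesis, namely that $D$ has no sinks. By definition a sink is a vertex of outdegree zero, so ``no sinks'' means exactly that every vertex of $D$ has at least one out-neighbor. Consequently any directed walk in $D$ may be extended by one further arc, and hence prolonged to a directed walk of arbitrary length. The strategy is therefore to take a witness for the adjacency in $C^M(D)$ and to keep walking: we never get stuck, and by appending the \emph{same} continuation to two walks we keep their termini equal.

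Concretely, suppose $u$ and $v$ are adjacent in $C^M(D)$. By definition there is an $M$-step common prey $z$ of $u$ and $v$, that is, a directed $(u,z)$-walk $P$ and a directed $(v,z)$-walk $Q$, each of length $M$. Since $D$ has no sinks, I would choose an out-neighbor $z_1$ of $z$, then an out-neighbor $z_2$ of $z_1$, and so on, producing an infinite directed walk $z=z_0 \to z_1 \to z_2 \to \cdots$ in which each $(z_i,z_{i+1})$ is an arc of $D$. The key step is then to append a common tail to both $P$ and $Q$: fixing $m \ge M$ and setting $k=m-M \ge 0$, the concatenation of $P$ with the segment $z_0 \to z_1 \to \cdots \to z_k$ is a directed $(u,z_k)$-walk of length $M+k=m$, while the concatenation of $Q$ with the \emph{same} segment is a directed $(v,z_k)$-walk of length $m$. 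Hence $z_k$ is an $m$-step common prey of $u$ and $v$, so $u$ and $v$ are adjacent in $C^m(D)$, as desired.

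I expect essentially no obstacle here beyond the bookkeeping of lengths and endpoints: because the identical tail is glued onto both $P$ and $Q$, the two prolonged walks automatically share the endpoint $z_k$ and the length $m$, so there is no risk of the two extended walks diverging. The one place where the hypothesis is indispensable is the construction of the infinite extension $z_0 \to z_1 \to \cdots$; without the no-sinks assumption the walk out of $z$ could terminate at a vertex of outdegree zero, and the monotonicity could fail. (Note that ``no sinks'' gives $W_0=\emptyset$, whence $\zeta(D)=0$ and $W_{\zeta(D)}=\emptyset$, so by Proposition~\ref{prop:acyclic} such a $D$ is never acyclic; this is consistent with the existence of the arbitrarily long extensions used above.)
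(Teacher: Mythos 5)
Your proof is correct and is essentially the paper's own argument: both take an $M$-step common prey $z$ and use the no-sinks hypothesis to extend a walk from $z$, the only cosmetic difference being that the paper appends one out-neighbor at a time and iterates, while you construct the whole common tail $z_0 \to z_1 \to \cdots \to z_{m-M}$ at once and glue it onto both walks. Your aside that such a $D$ must contain a directed cycle is also accurate, though not needed for the proof.
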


\begin{proof}
  Let $x$ and $y$ are adjacent in $C^M(D)$.
  Then $x$ and $y$ have an $M$-step common prey $z$ in $D$.
  By the hypothesis, $z$ has an out-neighbor $w$ in $D$.
  Then $w$ is an $(M+1)$-step common prey of $x$ and $y$.
  Hence $x$ and $y$ are adjacent in $C^{(M+1)}(D)$.
  We may repeat this argument to show that $x$ and $y$ are adjacent in $C^{(M+2)}(D)$.
  In this way, we may show that $x$ and $y$ are adjacent in $C^m(D)$ for any positive integer $m \ge M$.
\end{proof}

\begin{Prop}\label{prop:clique2}
  Let $D$ be a bipartite tournament with  bipartition $(V_1,V_2)$.
  In addition, let $\zeta(D) \ge 1$ and $(W_0, \ldots, W_{\zeta(D)})$ be the sink sequence of $D$, and $\WWW_k=\bigcup_{i=0}^{k-1}W_i$ for $1 \le k \le \zeta(D)+1$.
  Then each of $V_1 \setminus \WWW_m$ and $V_2 \setminus \WWW_m$ forms a clique in $C^m(D)$ for a positive integer $m < \zeta(D)$.
\end{Prop}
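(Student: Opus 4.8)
The plan is to exhibit, for each of the two parts, a single vertex that serves as an $m$-step common prey of \emph{all} vertices of that part lying outside $\WWW_m$; cliqueness is then immediate. First I would normalize by assuming without loss of generality that $W_0 \subset V_2$, so that by Proposition~\ref{lem:oddeven} one has $W_i \subset V_2$ for every even $i$ and $W_i \subset V_1$ for every odd $i$. Since $m < \zeta(D)$, the sets $W_0, W_1, \ldots, W_{m-1}$ are all nonempty, so I can fix representatives $v_i \in W_i$ for $i = 0, \ldots, m-1$ and, by Proposition~\ref{prop:clique}(3), obtain a directed descent path $v_{m-1} \to v_{m-2} \to \cdots \to v_1 \to v_0$ of length $m-1$ inside $D$. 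The two targets I aim for are the endpoints $v_0 \in W_0 \subset V_2$ and $v_1 \in W_1 \subset V_1$; for a vertex $u$ in a fixed part, exactly one of them has the parity forcing it to be a potential $m$-step prey of $u$ (a prey of a $V_1$-vertex lies in $V_2$ after an odd number of steps and in $V_1$ after an even number, and symmetrically for $V_2$), and this choice depends only on the part of $u$ and the parity of $m$.

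Next I would show that every $u \in V_1 \setminus \WWW_m$, and symmetrically every $u \in V_2 \setminus \WWW_m$, reaches its parity-compatible target in exactly $m$ steps by attaching $u$ to the front of a suitable initial segment of the descent path. There are two mechanisms. When $u$ and the top vertex $v_{m-1}$ lie in opposite parts---precisely when $u \in V_1$ and $m$ is odd, or $u \in V_2$ and $m$ is even---Proposition~\ref{prop:clique}(1) or (2) supplies the single arc $u \to v_{m-1}$, using $u \in V_i \setminus \WWW_m \subseteq V_i \setminus \WWW_{m-1}$, and then $u \to v_{m-1} \to \cdots \to v_0$ is a walk of length $m$ terminating at $v_0$. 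When instead $u$ and $v_{m-1}$ lie in the same part, I would first observe that $u$ is not a sink of $D_m = D - \WWW_m$, since $W_m$ (the sink set of $D_m$) lies in the part opposite to $u$ and hence $u \notin W_m$; this yields an out-neighbor $a$ of $u$ inside $D_m$, which lies in the opposite part and in $V(D)\setminus\WWW_m$. Proposition~\ref{prop:clique}(1) or (2) then provides the arc $a \to v_{m-1}$, so $u \to a \to v_{m-1} \to \cdots \to v_1$ is a walk of length $m$ terminating at $v_1$. In either mechanism the target is common to all $u$ in the given part, so $V_1 \setminus \WWW_m$ and $V_2 \setminus \WWW_m$ each admit a common $m$-step prey.

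The routine but delicate part is the index bookkeeping: one must verify that the indices $k'$ arising in the invocations of Proposition~\ref{prop:clique}(1) and (2) fall inside the admissible ranges $\{0, \ldots, \lfloor \zeta(D)/2 \rfloor - 1\}$ and $\{1, \ldots, \lceil \zeta(D)/2 \rceil - 1\}$, which follows from $m < \zeta(D)$ together with the parity of $m$. The main obstacle I anticipate is the ``same-part'' mechanism, since it consumes two steps and hence requires $m \ge 2$; the degenerate case $m = 1$, in which it would call for a descent of negative length, must be handled directly. For $m = 1$ the only subtle sub-case is $V_2 \setminus W_0$, where Proposition~\ref{prop:clique}(1) with $k' = 0$ gives the arc $u \to v_1$ for every $u \in V_2 \setminus W_0$ outright, making $v_1$ a common prey in $C^1(D)$, while $V_1 = V_1 \setminus W_0$ is covered by the opposite-parts mechanism with target $v_0$. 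Once all four part/parity combinations are checked, the conclusion that each of $V_1 \setminus \WWW_m$ and $V_2 \setminus \WWW_m$ is a clique in $C^m(D)$ follows at once.
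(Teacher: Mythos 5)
Your proof is correct and takes essentially the same route as the paper's: fix representatives along the sink sequence, descend via Proposition~\ref{prop:clique}(3) to a single common target in $W_0$ or $W_1$ determined by the parity of $m$ and the part containing $u$, and attach $u$ to the top of the descent using Proposition~\ref{prop:clique}(1)/(2). The only divergence is local and cosmetic: in your ``same-part'' case the paper attaches $u$ by a direct arc into $W_m$ (Proposition~\ref{prop:clique}(1)/(2) at index $m$, legitimate since $m<\zeta(D)$) instead of routing through an out-neighbor $a$ in $D_m$, and your $m=1$, $V_1$ sub-case invokes Proposition~\ref{prop:clique}(2) at $k'=0$, outside its stated range---though the needed fact (every vertex of $V_1$ dominates the sinks in $W_0\subset V_2$, $D$ being a bipartite tournament) is immediate, and the paper's own proof makes the identical out-of-range citation.
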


\begin{proof}

Since $1 \le m < \zeta(D)$, $V_1 \setminus \WWW_m \neq \emptyset$ and
$V_2 \setminus \WWW_m \neq \emptyset$.
 Take a vertex $v$ in  $V_1 \setminus \WWW_m$.
 Then, since $\WWW_{m-1} \subset \WWW_{m}$, $v \in V_1 \setminus \WWW_{m-1}$.
 If $m$ is even, there exists an arc from $v$ to any vertex in $W_m$  by Proposition~\ref{prop:clique}(2) and so any vertex in $W_1$ is an $m$-step prey of $v$ by (3) of the same proposition.
 If $m$ is odd, there exists an arc from $v$ to any vertex in $W_{m-1}$ by Proposition~\ref{prop:clique}(2) since $v \in V_1 \setminus \WWW_{m-1}$, and so any vertex in $W_0$ is an $m$-step prey of $v$.
 Since $v$ is arbitrarily chosen, a vertex in $W_0$ or a vertex in $W_1$ is an $m$-step prey of every vertex in $V_1 \setminus \WWW_m$ depending upon the parity of $m$.
 Thus $V_1 \setminus \WWW_m$ forms a clique in $C^m(D)$.
By applying a similar argument, we may show that $V_2 \setminus \WWW_m$ forms a clique in $C^m(D)$.
\end{proof}

Next, we characterize the $m$-step competition graph of an acyclic bipartite tournament and compute the competition index and the competition period of an acyclic bipartite tournament.
 
For given two graphs $G_1$ and $G_2$, we call the graph having the vertex set $V(G_1) \cup V(G_2)$ and the edge set $E(G_1) \cup E(G_2)$ the \emph{union} of $G_1$ and $G_2$ and denote it by $G_1 \cup G_2$.
Unless otherwise mentioned, $G_1 \cup G_2$ stands for the union of vertex-disjoint graphs $G_1$ and $G_2$.

\begin{Thm}\label{Thm:acyclic}
   Let $D$ be an acyclic bipartite tournament having bipartition $(V_1,V_2)$, $(W_0, \ldots, W_{\zeta(D)})$ be the sink sequence of $D$, and $\WWW_k=\bigcup_{i=0}^{k-1}W_i$ for $1 \le k \le \zeta(D)+1$.
   Then, for a positive integer $m$, the following are true:
   \begin{itemize}
     \item[(1)] $C^m(D)$ is an empty graph if $m>\zeta(D)$;
     \item[(2)] $C^m(D)$ is isomorphic to $K_{|W_{\zeta(D)}|} \cup I_{|\WWW_m|}$ if $m = \zeta(D)$;
     \item[(3)] $C^m(D)$ is isomorphic to $K_{|V_1 \setminus \WWW_m|} \cup K_{|V_2 \setminus \WWW_m|} \cup I_{|\WWW_m|}$ if $m < \zeta(D)$;
     \item[(4)] $\mathrm{cperiod}(D)=1$;
     \item[(5)] $\mathrm{cindex}(D)=\zeta(D)+1$ if $|W_{\zeta(D)}| \ge 2$; otherwise,
  \[
  \mathrm{cindex}(D)=\begin{cases}
                       \zeta(D) & \mbox{if } \zeta(D) = 1 \mbox{ or } |W_{\zeta(D)-1}| \ge 2;  \\
                       \zeta(D)-1 & \mbox{otherwise}.
                     \end{cases}
  \]
   \end{itemize}
\end{Thm}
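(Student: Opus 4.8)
The plan is to establish the three structural descriptions (1)–(3) first, and then read off the index and period in (4)–(5) by deciding exactly when each of these graphs carries an edge. For (1), I would invoke Proposition~\ref{Lem:noprey}: since $D$ is acyclic, every vertex lies in some $W_i$ with $0 \le i \le \zeta(D)$, and any directed walk issuing from it has length at most $i \le \zeta(D) < m$; hence no vertex has an $m$-step prey and $C^m(D)$ is edgeless. The isolated blocks $I_{|\WWW_m|}$ appearing in (2) and (3) come from the same bound: a vertex of $\WWW_m = \bigcup_{i<m} W_i$ sits in some $W_i$ with $i < m$, so its longest out-walk has length $< m$ and it is isolated in $C^m(D)$.

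For the clique part of (3) I would appeal directly to Proposition~\ref{prop:clique2}, which already gives that $V_1 \setminus \WWW_m$ and $V_2 \setminus \WWW_m$ are cliques when $m < \zeta(D)$; combining this with Proposition~\ref{prop:noedge} (no edge between the two partite sets) and the isolated set $\WWW_m$ yields the claimed disjoint union. For (2) the only point needing a genuine argument is that $W_{\zeta(D)}$ is a clique in $C^{\zeta(D)}(D)$: I would fix representatives $v_i \in W_i$ for $0 \le i \le \zeta(D)-1$, use Proposition~\ref{prop:clique}(3) to obtain the path $v_{\zeta(D)-1} \to \cdots \to v_0$, and then use the acyclic clause of Proposition~\ref{prop:clique}(3) (an arc from every vertex of $W_{\zeta(D)}$ into $W_{\zeta(D)-1}$) to prepend an arc from an arbitrary $u \in W_{\zeta(D)}$. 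This exhibits $v_0$ as a common $\zeta(D)$-step prey of every vertex of $W_{\zeta(D)}$, so together with the isolated $\WWW_{\zeta(D)}$ we get $C^{\zeta(D)}(D) = K_{|W_{\zeta(D)}|} \cup I_{|\WWW_{\zeta(D)}|}$.

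For (4) and (5) the organizing observation is that, by Corollary~\ref{cor:existence}, the set of $m$ with $C^m(D)$ edgeless is upward closed, so there is a threshold $t$ with $C^m(D)$ edgeless exactly when $m \ge t$. Because the sequence is eventually the constant empty graph, any periodic tail must actually be constant; hence $\mathrm{cindex}(D) = t$ (one cannot start the periodic part before the graph becomes permanently edgeless) and $\mathrm{cperiod}(D) = 1$, which is (4). It remains to compute $t$. Using that each $W_i$ with $i \le \zeta(D)$ is nonempty (acyclicity) together with the parity statement of Proposition~\ref{lem:oddeven} (so $W_i$ and $W_{i+2}$ lie in the same partite set), I would show: for $m \le \zeta(D)-2$ one of the two partite cliques in (3) contains $W_m \cup W_{m+2}$ and hence at least two vertices, so $C^m(D)$ has an edge; for $m = \zeta(D)-1$ the graph is $K_{|W_{\zeta(D)-1}|} \cup K_{|W_{\zeta(D)}|} \cup I_{|\WWW_{\zeta(D)-1}|}$, edgeless iff $|W_{\zeta(D)-1}| = 1$ and $|W_{\zeta(D)}| = 1$; and for $m = \zeta(D)$ the graph is edgeless iff $|W_{\zeta(D)}| = 1$. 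Reading off the first edgeless index in the regimes $|W_{\zeta(D)}| \ge 2$, and $|W_{\zeta(D)}| = 1$ split further by whether $\zeta(D) = 1$ or $|W_{\zeta(D)-1}| \ge 2$, gives the stated value of $\mathrm{cindex}(D)$.

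I expect the main obstacle to be the bookkeeping in (5): correctly verifying, across the parity-based case split, that the first edgeless competition graph occurs precisely at the claimed index, and in particular handling the boundary cases $\zeta(D) \in \{1,2\}$ where $m = \zeta(D)-2$ or $m = \zeta(D)-1$ drops below $1$ and the ``$W_m \cup W_{m+2}$ has two vertices'' argument no longer applies; these must be checked directly and shown to agree with the formula. The clique construction in (2) is the other delicate step, since it is the only place that uses both conclusions of Proposition~\ref{prop:clique}(3) and the acyclicity hypothesis in an essential way.
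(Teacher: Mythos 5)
Your proposal is correct and takes essentially the same route as the paper's proof: parts (1)--(3) via Proposition~\ref{Lem:noprey} for the isolated vertices, Propositions~\ref{prop:clique}(3), \ref{prop:clique2}, and \ref{prop:noedge} for the clique structure (with acyclicity forcing $V(D)=\bigcup_{i=0}^{\zeta(D)}W_i$), and parts (4)--(5) by identifying the first edgeless graph in the sequence, where your $W_m\cup W_{m+2}$ edge witness is exactly the paper's argument joining a vertex of $W_{\zeta(D)}$ to one of $W_{\zeta(D)-2}$ in the case $|W_{\zeta(D)}|=|W_{\zeta(D)-1}|=1$. Your ``threshold'' framing of $\mathrm{cindex}(D)$ through the upward-closedness given by Corollary~\ref{cor:existence} is only a tidier packaging of the paper's case-by-case computation, and your flagged boundary checks for $\zeta(D)\in\{1,2\}$ resolve just as the paper's do.
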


\begin{proof}
Suppose $\zeta(D) = 0$.
Then, by definition, $W_0 = V(D)$ or $W_0 =\emptyset$.
Since $D$ is a bipartite tournament, $W_0 \neq V(D)$.
Yet, since $D$ is acyclic, $W_0 \neq \emptyset$ by Proposition~\ref{prop:acyclic}.
Therefore we have reached a contradiction.
Therefore $\zeta(D) \ge 1$.
Suppose $m > \zeta(D)$.
Since $D$ is acyclic, $\bigcup_{i=0}^{\zeta(D)}W_i=V(D)$ by the furthermore part of Proposition~\ref{lem:oddeven} and so no vertex in $D$ has an $m$-step prey in $D$ by Proposition~\ref{Lem:noprey}.
Therefore $C^m(D)$ is an empty graph and so the statement (1) is true.
Then, by the definition of competition period, the statement (4) is immediately true.
Now suppose that $m \le \zeta(D)$.
By Proposition~\ref{Lem:noprey}, no vertex in $\WWW_m$ has an $m$-step prey in $D$ and so every vertex in $\WWW_m$ is an isolated vertex in $C^m(D)$.

Suppose $m = \zeta(D)$.
Since $D$ is acyclic, $W_{\zeta(D)} \neq \emptyset$ by Proposition~\ref{prop:acyclic}.
By the furthermore part of Proposition~\ref{lem:oddeven}, $V(D)=W_{\zeta(D)} \cup \WWW_m$.
Then, since $\zeta(D) \ge 1$, $W_{\zeta(D)}$ forms a clique in $C^m(D)$ by Proposition~\ref{prop:clique}.
Since we have shown that every vertex in $\WWW_m$ is an isolated vertex in $C^m(D)$, $C^m(D)$ is isomorphic to $K_{|W_{\zeta(D)}|} \cup I_{|\WWW_m|}$.

Now suppose $m < \zeta(D)$.
Then, by Proposition~\ref{prop:clique2}, each of  $V_1 \setminus \WWW_m$ and $V_2 \setminus \WWW_m$ forms a clique in $C^m(D)$.
Moreover, by Proposition~\ref{prop:noedge}, there is no edge between $K_{|V_1 \setminus \WWW_m|}$ and $K_{|V_2 \setminus \WWW_m|}$.
Since every vertex in $\WWW_m$ is an isolated vertex in $C^m(D)$,
$C^m(D)$ is isomorphic to
 \[
   K_{|V_1 \setminus \WWW_m|} \cup K_{|V_2 \setminus \WWW_m|} \cup I_{|\WWW_m|}.
   \]
Thus the statement (3) is true.

By (1), $\mathrm{cindex}(D) \le \zeta(D)+1$.
In addition, by (1), it is sufficient to show that $C^i(D)$ is not an empty graph in order to prove that $\mathrm{cindex}(D) \ge i+1$ for a positive integer $i$.
If $|W_{\zeta(D)}| \ge 2$, then $C^{\zeta(D)}(D)$ has edges joining each pair of vertices in $W_{\zeta(D)}$ by Proposition~\ref{prop:clique} and so $\mathrm{cindex}(D) = \zeta(D)+1$.
Now suppose $|W_{\zeta(D)}| \le 1$.
Since $D$ is acyclic, $W_{\zeta(D)} \neq \emptyset$ by Proposition~\ref{prop:acyclic}.
Therefore $|W_{\zeta(D)}| = 1$.
If $\zeta(D) = 1$, then $C(D)$ is empty and so $\mathrm{cindex}(D) = 1=\zeta(D)$.
Consider the case $\zeta(D) \ge 2$.
If $|W_{\zeta(D)-1}| \ge 2$, then $C^{\zeta(D)-1}(D)$ has edges joining each pair of vertices in $W_{\zeta(D)-1}$ and so, by the supposition $|W_{\zeta(D)}| =1$ and Proposition~\ref{Lem:noprey}, $\mathrm{cindex}(D) = \zeta(D)$.
Suppose that $|W_{\zeta(D)-1}| = 1$.
Then $C^{\zeta(D)-2}(D)$ has at least one edge joining a vertex in $W_{\zeta(D)}$ and a vertex in $W_{\zeta(D)-2}$ by Proposition~\ref{prop:clique2} and so $\mathrm{cindex}(D) \ge \zeta(D)-1$.
By Proposition~\ref{Lem:noprey}, the vertices in $\WWW_{\zeta(D)-1}$ are isolated in $C^{\zeta(D)-1}(D)$.
By Proposition~\ref{prop:noedge}, any vertex in $W_{\zeta(D)}$ and any vertex in $W_{\zeta(D)-1}$ are not adjacent in $C^{\zeta(D)-1}(D)$.
Thus, by the suppositions $|W_{\zeta(D)}|=|W_{\zeta(D)-1}|=1$, $C^{\zeta(D)-1}(D)$ is an empty graph.
Then, by Corollary~\ref{cor:existence}, $C^{\zeta(D)}(D)$ is an empty graph.
Hence we may conclude that $\mathrm{cindex}(D) = \zeta(D)-1$.
\end{proof}

In the following, we shall characterize the $m$-step competition graph of a bipartite tournament having a directed cycle and compute the competition index and the competition period of a bipartite tournament having a directed cycle.
To do so, we need the following lemmas.

\begin{Lem}\label{lem:atleastone}
  Let $D$ be a bipartite tournament having bipartition $(V_1,V_2)$ without sinks and
  $G$ be the $m$-step competition graph of $D$ for an integer $m \ge 2$.
  Then, $G[V_i]$ is a complete graph or (not necessarily disjoint) union of two complete graphs for each $i=1,2$.
\end{Lem}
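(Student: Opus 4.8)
The plan is to analyse $G[V_1]$ through the \emph{predator cliques} it is built from, and to collapse them to at most two using the no-sink hypothesis. Two vertices $u,v\in V_1$ are adjacent in $G[V_1]$ exactly when they have a common $m$-step prey, so for each vertex $w$ let $R(w)$ be the set of $x\in V_1$ for which $w$ is an $m$-step prey. Each $R(w)$ induces a clique of $G[V_1]$, and since $D$ has no sink every $x\in V_1$ has an $m$-step prey and hence lies in some $R(w)$; thus the cliques $R(w)$ cover every vertex and every edge of $G[V_1]$. Because $D$ is a bipartite tournament, a walk of length $m$ starting in $V_1$ ends in $V_1$ when $m$ is even and in $V_2$ when $m$ is odd, so the prey $w$ ranges over a single part, which I call $P$. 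The entire task is to reduce the covering family $\{R(w):w\in P\}$ to at most two cliques.

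The first key step is a monotonicity comparison: if $w,w'\in P$ satisfy $N^+(w')\subseteq N^+(w)$, then $R(w)\subseteq R(w')$. The idea is simply to reroute the last arc: given a walk of length $m$ from $x$ to $w$, its final arc is $z\to w$ for some $z$ that is not an out-neighbour of $w$; since $N^+(w')\subseteq N^+(w)$, the vertex $z$ is not an out-neighbour of $w'$ either, so $z\to w'$, and replacing the last arc yields a length-$m$ walk from $x$ to $w'$. Consequently the largest predator cliques come from prey whose out-neighbourhood is inclusion-minimal.

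The decisive input is that the family $\{N^+(w):w\in P\}$ has at least two inclusion-minimal members: a unique minimum would be contained in every out-neighbourhood of $P$, forcing each of its (nonempty, as there are no sinks) elements to be a sink in the opposite part, a contradiction. If there are exactly two minimal out-neighbourhoods $S_1,S_2$, I choose $w_1,w_2\in P$ realizing them; every out-neighbourhood of $P$ contains $S_1$ or $S_2$, so by monotonicity every $R(w)$ is contained in $R(w_1)$ or in $R(w_2)$, whence $G[V_1]$ is the union of the two complete graphs on $R(w_1)$ and $R(w_2)$. If instead there are at least three minimal out-neighbourhoods, I claim $G[V_1]$ is complete: for any $u,v\in V_1$ I pick a minimal out-neighbourhood $S^*$ different from those of $u$ and $v$, realized by some $w^*\in P$; then a two-step move sends both $u$ and $v$ to $w^*$ (valid precisely because $S^*$ is minimal and distinct from their out-neighbourhoods), and I prolong by a common walk from $w^*$—available since the absence of a minimum out-neighbourhood means $P$ has no sink for the two-step relation—to produce a common $m$-step prey. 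The same scheme, applied with the roles of $V_1,V_2$ interchanged, settles $G[V_2]$.

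I expect the main obstacle to be the parity bookkeeping in the completeness case together with a careful statement of the monotonicity in the odd case: when $m$ is odd the common prey lies in $V_2$ and the routing must begin with a single step out of $V_1$ before the two-step moves inside $V_2$, so the minimal-out-neighbourhood analysis has to be carried out on the part $P$ that contains the prey rather than on $V_1$ itself. The remaining care goes into pinning down the boundary between the two-minimal case (genuinely two complete graphs) and the at-least-three case (complete graph), and into checking that the degenerate small configurations still fit the statement—an isolated vertex being a $K_1$, so that a clique together with one isolated vertex is still a union of two complete graphs.
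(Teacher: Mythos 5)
Your proposal is correct, but it takes a genuinely different route from the paper's proof. The paper argues locally from a single non-edge: it fixes nonadjacent $x,y \in V_1$, observes via Proposition~\ref{prop:adjacent} (this is where the no-sink hypothesis enters) that $N^+_D(x)\cap N^+_D(y)=\emptyset$, and then partitions $V_1$ into $X=\{v \mid N^+_D(v)\subset N^+_D(x)\}$, $Y=\{v \mid N^+_D(v)\subset N^+_D(y)\}$, and the remainder $Z$, showing that every vertex of $X\cup Z$ has $y$ as a $2$-step prey and every vertex of $Y\cup Z$ has $x$ as a $2$-step prey, so that $X\cup Z$ and $Y\cup Z$ are the two cliques and no $X$--$Y$ edge exists. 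You instead work globally on the prey side: you cover $G[V_1]$ by the predator cliques $R(w)$, prove the rerouting monotonicity $N^+_D(w')\subseteq N^+_D(w)\Rightarrow R(w)\subseteq R(w')$, show that the family $\{N^+_D(w) \mid w\in P\}$ has at least two inclusion-minimal members (a unique minimal member would force a sink in the opposite part), and split on whether there are exactly two minimal members (giving $G[V_1]$ as the union of the two cliques $R(w_1)$, $R(w_2)$ attached to minimal prey) or at least three (giving completeness). Both arguments are sound; yours buys a self-contained proof that never invokes Proposition~\ref{prop:adjacent}, a canonical identification of the two cliques as predator sets of inclusion-minimal prey, and an explicit sufficient condition for completeness, while the paper's is shorter and recycles machinery already established. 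Two points in your write-up deserve to be made explicit: first, in the at-least-three case the choice of a minimal $S^*$ ``different from those of $u$ and $v$'' works because any family member contained in a minimal member must equal it, so $S^*\neq N^+_D(u)$ already yields $N^+_D(u)\not\subseteq S^*$ even when $N^+_D(u)$ is not itself minimal (and for odd $m$ this must be applied, as you indicate, to first-step vertices $a\in N^+_D(u)$, $b\in N^+_D(v)$, with the extension length becoming $m-3\ge 0$); second, the appeal to ``no sink for the two-step relation'' is an unnecessary detour---since $D$ itself has no sinks, a common walk of length $m-2$ (or $m-3$) out of $w^*$ exists outright, which is all the prolongation step needs.
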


\begin{proof}
  By symmetry, it is sufficient to show that the statement is true for $G_1:=G[V_1]$.
  If $G_1$ is complete, then the statement is trivially true.
  Therefore we may assume that $G_1$ is not complete.
  Then there exist two nonadjacent vertices, say $x$ and $y$, in $G_1$.
  Since $D$ has no sink, $x$ and $y$ cannot have a common out-neighbor in $D$ by Proposition~\ref{prop:adjacent}.
  Then, since $D$ is a bipartite tournament,
  \begin{equation}\label{eqn:neighbors}
    N^+_D (x) \subset N^-_D (y) \quad \text{and} \quad N^+_D (y) \subset N^-_D (x).
  \end{equation}
  Let $X$ and $Y$ be the sets defined by
  \[
  X=\{v \in V_1 \mid N^+_D (v) \subset N^+_D(x) \} \quad \text{and} \quad
  Y=\{v \in V_1 \mid N^+_D(v) \subset N^+_D(y) \}.
  \]
  Since $x \in X$ and $y \in Y$, $X \neq \emptyset$ and $Y \neq \emptyset$.
  Then,
  \[
  X \cap Y = \{v \in V_1 \mid N^+_D (v) \subset N^+_D(x) \cap N^+_D(y)\}.
  \]
  Since $N^+_D(v) \neq \emptyset$ for any $v \in V_1$ and $N^+_D(x) \cap N^+_D(y) = \emptyset$, we have $X \cap Y = \emptyset$.
  By \eqref{eqn:neighbors}, every vertex in $X$ (resp.\ $Y$) has $y$ (resp.\ $x$) as a $2$-step prey, so each of $X$ and $Y$ forms a clique in $C^m(D)$ by Proposition~\ref{prop:adjacent}.
  An $m$-step prey of a vertex in $X$ (resp.\ $Y$) is an $m$-step prey of $x$ (resp.\ $y$) by definition.
  Since $x$ and $y$ are nonadjacent in $G_1$, $x$ and $y$ do not have an $m$-step common prey in $D$.
  Therefore any vertex in $X$ and any vertex in $Y$ do not have an $m$-step common prey in $D$ and thus are not adjacent in $G_1$.
  Now, if $X \cup Y =V_1$, then the statement is immediately true.

  Suppose $X \cup Y \neq V_1$.
  Then $Z:= V_1 \setminus (X \cup Y) \neq \emptyset$.
  Take a vertex $z \in Z$.
  By definition, there exist $v$ and $w$ in $N^+_D(z)$ satisfying $v \notin N^+_D(x)$ and $w \notin N^+_D(y)$.
  Since $D$ is a bipartite tournament, $v \in N^-_D(x)$ and $w \in N^-_D(y)$.
  Thus $x$ and $y$ are $2$-step prey of $z$.
  Since $x$ (resp.\ $y$) is a $2$-step prey of every vertex in $Y$ (resp.\ $X$), every vertex in $X \cup Y$ is adjacent to $z$ in $G_1$ by Proposition~\ref{prop:adjacent}.
  Since $z$ was arbitrarily chosen, every vertex in $Z$ is adjacent to every vertex in $X \cup Y$.
  Since every vertex in $Z$ has $x$ as a $2$-step prey, $Z$ forms a clique in $G_1$ by Proposition~\ref{prop:adjacent}.
  Thus we may conclude that each of $Z \cup X$ and $Z \cup Y$ forms a clique in $G_1$.
  As we have shown that any vertex in $X$ and any vertex in $Y$ are not adjacent in $G_1$, $G_1$ is a union of two complete graphs.
  Hence we have shown that $G_1$ is a complete graph or a union of two complete graphs.
  \end{proof}

\begin{Lem}\label{lem:period at least one}
Let $D$ be a bipartite tournament having bipartition $(V_1,V_2)$ without sinks.
Then $\mathrm{cperiod}(D) = 1$ and $\mathrm{cindex}(D) \le 4$.
\end{Lem}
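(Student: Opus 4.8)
The plan is to exploit that $C^m(D)$ splits along the bipartition and that, in the absence of sinks, the competition relation is monotone in $m$. First I would reduce to a single part: by Proposition~\ref{prop:noedge} no edge of $C^m(D)$ joins $V_1$ to $V_2$, so $C^m(D)$ is the vertex-disjoint union of its induced subgraphs on $V_1$ and on $V_2$. Consequently both $\mathrm{cindex}(D)$ and $\mathrm{cperiod}(D)$ are the maxima of the corresponding quantities over the two parts, and by symmetry it suffices to control the sequence $C^2(D)[V_1], C^3(D)[V_1], \ldots$.

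For the period, the key is Proposition~\ref{prop:adjacent}: since $D$ has no sinks, once two vertices are adjacent in $C^M(D)$ they stay adjacent in every $C^m(D)$ with $m \ge M$. Thus the edge sets $E(C^1(D)) \subseteq E(C^2(D)) \subseteq \cdots$ form a nondecreasing chain on a fixed finite vertex set, hence the graph sequence is eventually constant. Writing $q$ for the first index at which it becomes constant, we get $C^q(D) = C^{q+1}(D) = \cdots$, which yields $\mathrm{cperiod}(D) = 1$ and reduces the whole lemma to the inequality $q \le 4$; equivalently, I must show that every edge ever produced already appears in $C^4(D)$, i.e. $C^4(D) = \bigcup_{m \ge 1} C^m(D)$.

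To see why $4$ (rather than an unbounded quantity) is the right threshold, I would pass to the auxiliary digraph $H_1$ on $V_1$ whose arcs are the $2$-step prey pairs of $D$; since $D$ is a bipartite tournament, $(u,u')$ is an arc of $H_1$ exactly when $N^+_D(u) \not\subseteq N^+_D(u')$. Because $D$ has no sinks, $H_1$ has no sinks, and a direct Boolean-matrix computation gives $C^{2k}(D)[V_1] = C^k(H_1)$ for every $k \ge 1$. Hence the even part of the sequence on $V_1$ is exactly the competition sequence of $H_1$, and the goal $C^4(D)[V_1] = C^6(D)[V_1] = \cdots$ becomes $\mathrm{cindex}(H_1) \le 2$ with $\mathrm{cperiod}(H_1) = 1$. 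Granting this, monotonicity finishes the job on $V_1$: from $C^4(D)[V_1] = C^6(D)[V_1]$ together with $C^4(D)[V_1] \subseteq C^5(D)[V_1] \subseteq C^6(D)[V_1]$ one obtains $C^4 = C^5 = C^6$ on $V_1$, and an induction of the same shape propagates the equality to all larger $m$; the symmetric argument applies to $V_2$, giving $\mathrm{cindex}(D) \le 4$.

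The main obstacle is precisely the reduced statement $\mathrm{cindex}(H_1) \le 2$. One cannot argue as for an arbitrary sink-free digraph, since those can have arbitrarily large competition index, so the almost-tournament structure of $H_1$ must be used: two vertices with incomparable out-neighbourhoods in $D$ span a $2$-cycle in $H_1$, while a non-arc forces a containment of out-neighbourhoods. Concretely I would try to shorten a common $(k{+}1)$-step walk in $H_1$ (for $k \ge 2$) to a common $k$-step walk by re-routing the penultimate step of one walk onto the other walk's endpoint; the delicate case is when both re-routings are blocked, which forces an alternating $2$-cycle between the two candidate endpoints. Resolving that case is where Lemma~\ref{lem:atleastone} enters, since it pins down the stable graph on $V_1$ as a union of at most two cliques, so the pairs that never compete are exactly those across a single fixed complete-bipartite cut; one then shows this cut is already realized at step $2$ of $H_1$, i.e. at step $4$ of $D$. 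I expect this last identification -- that no genuinely new non-edge survives past the two-clique threshold -- to be the crux of the argument.
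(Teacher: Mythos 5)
Your framework is largely sound, and your period argument, while different in mechanism from the paper's, is correct: the paper simply quotes Cho and Kim's result that a sink-free digraph has competition period $1$, whereas you derive it from Proposition~\ref{prop:adjacent} (monotonicity of edge sets under no sinks, hence an eventually constant nondecreasing chain), which is a clean self-contained alternative. Likewise your reduction apparatus checks out: Proposition~\ref{prop:noedge} does split $C^m(D)$ along the bipartition; $(u,u')$ is an arc of your auxiliary digraph $H_1$ exactly when $N^+_D(u) \not\subseteq N^+_D(u')$; $H_1$ is sink-free; and $C^{2k}(D)[V_1] = C^k(H_1)$ holds because every directed walk of even length from $V_1$ factors into $2$-step segments within $V_1$. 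Your sandwiching of the odd steps between stabilized even steps is also fine.

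The genuine gap is that the entire lemma has been compressed into the reduced claim $\mathrm{cindex}(H_1) \le 2$, and you do not prove it --- you say you ``would try'' walk-shortening and ``expect'' the blocked case to be resolvable via Lemma~\ref{lem:atleastone}. That reduced claim is exactly as hard as the original inequality $\mathrm{cindex}(D) \le 4$, and the appeal to Lemma~\ref{lem:atleastone} cannot close it by itself: that lemma describes $C^m(D)[V_1]$ as a union of at most two cliques \emph{for each fixed} $m \ge 2$, but the two-clique partition may coarsen as $m$ grows (non-edges can disappear by monotonicity), so per-$m$ structure gives no bound on \emph{when} the sequence stabilizes --- your assertion that the cut of never-competing pairs ``is already realized at step $2$ of $H_1$'' is precisely the unproven crux, not a consequence of the two-clique structure. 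What the paper supplies at this point, and what your sketch is missing, is a quantitative construction: for $x, y \in V_1$ adjacent in $C^M(D)$ with $X = N^+_D(x)$ and $Y = N^+_D(y)$ disjoint, the adjacency hypothesis itself forces the existence of $z \in V_1$ with $N^+_D(z) \neq X$, $N^+_D(z) \neq Y$, and $N^+_D(z) \not\supseteq X \cup Y$ (otherwise every vertex reachable from $x$ or $y$ lies in $X \cup Y \cup Z_1 \cup Z_2$ and a step-count argument modulo $4$ shows $x$ and $y$ could never share an $m$-step prey for any $m$, contradicting adjacency in $C^M(D)$); then either $X \not\subseteq N^+_D(z)$ and $Y \not\subseteq N^+_D(z)$, in which case $z$ is a $2$-step common prey, or (say) $N^+_D(z) \supseteq X$, in which case choosing $w_3 \in X$, $w_4 \in N^+_D(z) \setminus X$, $w_5 \in Y \setminus N^+_D(z)$ yields the explicit walks $x \rightarrow w_3 \rightarrow y \rightarrow w_5 \rightarrow x$ and $y \rightarrow w_5 \rightarrow z \rightarrow w_4 \rightarrow x$, exhibiting $x$ itself as a $4$-step common prey. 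Note the logical shape: the special vertex $z$ is extracted \emph{from the assumption that $x$ and $y$ are adjacent at some step}, which is exactly the leverage your pure walk-shortening in $H_1$ lacks; without an argument of this kind your proposal does not go through.
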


\begin{proof}
  Cho and Kim~\cite{cho2004competition} showed that a digraph without sinks has competition period $1$, so $\mathrm{cperiod}(D) = 1$.
  In the following, we shall show $\mathrm{cindex}(D) \le 4$.
  Suppose that there exist two vertices $x$ and $y$ such that $x$ and $y$ are adjacent in $C^M(D)$ for some positive integer $M$.
  Without loss of generality, we may assume that $x$ and $y$ belong to $V_1$ by Proposition~\ref{prop:noedge}.
  By Proposition~\ref{prop:adjacent}, it is sufficient to show that $x$ and $y$ are adjacent in $C^m(D)$ for some positive integer $m \le 4$.
  Let
  \[
  X=N_D^+(x) \quad \mbox{and} \quad Y=N_D^+(y).
  \]
  By the hypothesis, $X \neq \emptyset$ and $Y \neq \emptyset$.
  If $X \cap Y \neq \emptyset$, then $x$ and $y$ are adjacent in $C(D)$ and we are done.
  Consider the case $X \cap Y = \emptyset$.
  Then, since $D$ is a bipartite tournament,
   \begin{equation*}
    N^+_D (x) \subset N^-_D (y) \quad \text{and} \quad N^+_D (y) \subset N^-_D (x).
  \end{equation*}
  Moreover, since $X \cap Y = \emptyset$, $V_2$ is a disjoint union of the following sets:
\begin{equation*}\label{v_1}
N_D^-(x) \cap N_D^-(y); \quad
N_D^-(x) \cap N_D^+(y)=Y; \quad
N_D^+(x) \cap N_D^-(y)=X.
\end{equation*}
  Suppose to the contrary that, for each $z \in V_1$, $N_D^+(z)=X$ or $N_D^+(z)=Y$ or $N_D^+(z) \supset X \cup Y $.
  Then, $V_1$ is a disjoint union of $Z_1:=\{z \in V_1 \mid N_D^+(z)=X\}$, $Z_2:=\{z \in V_1 \mid N_D^+(z)=Y\}$, and $V_1 \setminus (Z_1 \cup Z_2)$.
  Then  the only possible $m$-step prey of $x$ or $y$ are vertices in $Y$, vertices in $X$, vertices in $Z_1$, or vertices in $Z_2$.
  To see why, suppose there exists a $(u,v)$-directed walk $W$ for some $u \in V$ and $v \in V_1 \setminus (Z_1 \cup Z_2)$ (resp.\ $v \in V_2 \setminus (X \cup Y)$).
  By definition, the vertex $v_1$ right before $v$ on $W$ belongs to $V_2 \setminus (X \cup Y)$ (resp.\ $V_1 \setminus (Z_1 \cup Z_2)$).
  Then, the vertex right before $v_1$ on $W$ belongs to $V_1 \setminus (Z_1 \cup Z_2)$ (resp.\ $V_2 \setminus (X \cup Y)$).
   Therefore any vertex in $V_1 \setminus (Z_1 \cup Z_2)$ or $V_2 \setminus (X \cup Y)$ is reachable only from a vertex in $V_1 \setminus (Z_1 \cup Z_2)$ or $V_2 \setminus (X \cup Y)$.
Thus we have shown that the only possible $m$-step prey of $x$ or $y$ are vertices in $Y$, vertices in $X$, vertices in $Z_1$, or vertices in $Z_2$.
Yet, a vertex in $Y$ (resp.\ $X$) can be only $(4k_1+3)$-step (resp.\ $(4k_1+1)$-step) prey of $x$ while it can be only $(4k_2+1)$-step (resp.\ $(4k_2+3)$-step) prey of $y$, and a vertex in $Z_1$ (resp.\ a vertex in $Z_2$) can be only $4k_3$-step (resp.\ $(4k_3+2)$-step) prey of $x$ while it can be only $(4k_4+2)$-step (resp.\ $4k_4$-step) prey of $y$ for nonnegative integers $k_1$, $k_2$, $k_3$, and $k_4$.
Therefore there are no $m$-step common prey of $x$ and $y$ for any integer $m \ge 1$, and we reach a contradiction.
Thus there exists $z \in V_1$ such that $N_D^+(z) \neq X$, $N_D^+(z) \neq Y$, and $N_D^+(z) \not\supset X \cup Y$.
Hence
\[
\text{$Z:=\{z \in V_1 \mid N_D^+(z) \neq X, N_D^+(z) \neq Y, \text{ and } N_D^+(z) \not\supset X \cup Y \} \neq \emptyset$.}
\]
Suppose that there is $z \in Z$ such that $N_D^+(z) \not\supset X $ and $N_D^+(z) \not\supset Y$.
Then we may take two vertices $w_1 \in X \setminus N_D^+(z)$ and $w_2 \in Y \setminus N_D^+(z)$.
Since $D$ is a bipartite tournament, $(w_1,z) \in A(D)$ and $(w_2,z)\in A(D)$.
Thus $z$ is a $2$-step common prey of $x$ and $y$ in $D$ and we are done.
Now suppose that $N_D^+(z) \supset X$ or $N_D^+(z) \supset Y$ for any $z \in Z$ and fix $z \in Z$.
Without loss of generality, we may assume $N_D^+(z) \supset X$.
Then, since $N_D^+(z) \neq X$ and $N_D^+(z) \not\supset X \cup Y$ by the definition of $Z$, $N_D^+(z) \setminus X \neq \emptyset$ and $N_D^+(z) \not\supset Y$.
Now we may take $w_4 \in N_D^+(z) \setminus X$ and $w_5 \in Y \setminus N_D^+(z)$.
Since $D$ is a bipartite tournament, $(w_4 , x) \in A(D)$ and $(w_5, z)\in A(D)$.
Since $X \neq \emptyset$, we may take $w_3 \in X$.
By the way, since $X \cap Y = \emptyset$, $(w_3, y) \in A(D)$ and $(w_5, x) \in A(D)$.
Thus $x \rightarrow w_3 \rightarrow y \rightarrow w_5 \rightarrow x$ and
$y \rightarrow w_5 \rightarrow z \rightarrow w_4 \rightarrow x$ are directed walks in $D$.
Hence $x$ and $y$ have $x$ as a $4$-step common prey in $D$ and we are done.
\end{proof}

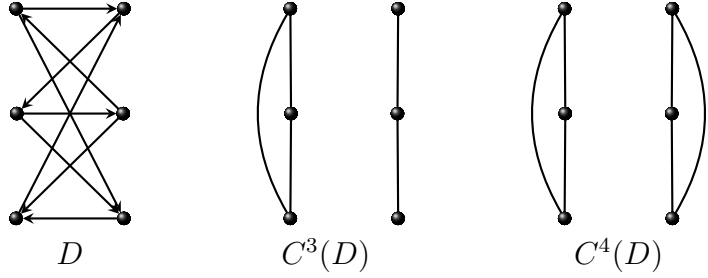
\begin{figure}
  \begin{center}
  \begin{tabular}{ccccc}
    \begin{tikzpicture}[auto,thick,scale=0.7]
    \tikzstyle{player}=[minimum size=5pt,inner sep=0pt,outer sep=0pt,ball color=black,circle]
    \tikzstyle{source}=[minimum size=5pt,inner sep=0pt,outer sep=0pt,ball color=black, circle]
    \tikzstyle{arc}=[minimum size=5pt,inner sep=1pt,outer sep=1pt, font=\footnotesize]
    \path (117:2.23cm)  node [player]  (a){};
    \path (180:1cm)     node [player]  (b) {};
    \path (243:2.23cm)  node [player]  (c) {};
    \path (63:2.23cm)   node [player]  (d) {};
    \path (0:1cm)       node [player]  (e){};
    \path (297:2.23cm)  node [player]  (f){};
   \draw[black,thick,-stealth] (a) - +(d);
   \draw[black,thick,-stealth] (e) - +(a);
   \draw[black,thick,-stealth] (a) - +(f);
   \draw[black,thick,-stealth] (d) - +(b);
   \draw[black,thick,-stealth] (b) - +(e);
   \draw[black,thick,-stealth] (b) - +(f);
   \draw[black,thick,-stealth] (c) - +(d);
   \draw[black,thick,-stealth] (e) - +(c);
   \draw[black,thick,-stealth] (f) - +(c);
    \end{tikzpicture}

& \phantom{ddd} &

\begin{tikzpicture}[auto,thick,scale=0.7]
    \tikzstyle{player}=[minimum size=5pt,inner sep=0pt,outer sep=0pt,ball color=black,circle]
    \tikzstyle{source}=[minimum size=5pt,inner sep=0pt,outer sep=0pt,ball color=black, circle]
    \tikzstyle{arc}=[minimum size=5pt,inner sep=1pt,outer sep=1pt, font=\footnotesize]
    \path (117:2.23cm)  node [player]  (a){};
    \path (180:1cm)     node [player]  (b) {};
    \path (243:2.23cm)  node [player]  (c) {};
    \path (63:2.23cm)   node [player]  (d) {};
    \path (0:1cm)       node [player]  (e){};
    \path (297:2.23cm)  node [player]  (f){};

    \path (a) edge [black, bend right=30,thick,-] (c);
    \draw[black,thick,-] (a) -- ++(b);
    \draw[black,thick,-] (b) -- ++(c);
    \draw[black,thick,-] (d) -- ++(e);
    \draw[black,thick,-] (e) -- ++(f);
    \end{tikzpicture}

& \phantom{ddd} &

\begin{tikzpicture}[auto,thick,scale=0.7]
    \tikzstyle{player}=[minimum size=5pt,inner sep=0pt,outer sep=0pt,ball color=black,circle]
    \tikzstyle{source}=[minimum size=5pt,inner sep=0pt,outer sep=0pt,ball color=black, circle]
    \tikzstyle{arc}=[minimum size=5pt,inner sep=1pt,outer sep=1pt, font=\footnotesize]
    \path (117:2.23cm)  node [player]  (a){};
    \path (180:1cm)     node [player]  (b) {};
    \path (243:2.23cm)  node [player]  (c) {};
    \path (63:2.23cm)   node [player]  (d) {};
    \path (0:1cm)       node [player]  (e){};
    \path (297:2.23cm)  node [player]  (f){};
    \path (a) edge [black, bend right=30,thick,-] (c);
    \path (d) edge [black, bend left=30,thick,-] (f);
    \draw[black,thick,-] (a) -- ++(b);
    \draw[black,thick,-] (b) -- ++(c);
    \draw[black,thick,-] (d) -- ++(e);
    \draw[black,thick,-] (e) -- ++(f);
    \end{tikzpicture}
\\
 $D$ & \phantom{ddd} & $C^3(D)$ & \phantom{ddd} & $C^4(D)$
\end{tabular}
\end{center}
\caption{A digraph $D$ without sinks and with competition index $4$.}\label{fig:index4}
\end{figure}

The upper bound given in Lemma~\ref{lem:period at least one} is sharp as seen by the digraph $D$ given in Figure~\ref{fig:index4}.
The digraph $D$ has no sinks, $C^3(D) \neq C^4(D)$, $C^m(D)=C^4(D)$ for any integer $m \ge 4$. Therefore the competition index of $D$ is $4$.

\begin{Thm}\label{thm:main}
Let $D$ be a bipartite tournament with bipartition $(V_1,V_2)$ which has a directed cycle and $G_m$ be the $m$-step competition graph of $D$ for an integer $m \ge 2$.
Then, $G_m[V_i]$ is a disjoint union of complete graphs among which at most two are nontrivial, or, by deleting the isolated vertices from $G_m[V_i]$ if any, we obtain a non-disjoint union of two complete graphs for each $i=1,2$.
Moreover,
\[
\left\{ \begin{aligned}
\mathrm{cindex}(D)\le 4    &&   \mbox{ and }  && \mathrm{cperiod}(D)=1     && \mbox{ if  }\  \zeta(D) = 0;  \\
\mathrm{cindex}(D)\le 4     &&   \mbox{ and }  && \mathrm{cperiod}(D) \le 2 && \mbox{ if  }\  \zeta(D) = 1;  \\
\mathrm{cindex}(D) =\zeta(D) &&  \mbox{ and }  && \mathrm{cperiod}(D)=1     && \mbox{ if  }\  \zeta(D) \ge 2.
\end{aligned}
 \right.
\]
\end{Thm}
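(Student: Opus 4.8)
Since $D$ has a directed cycle, Proposition~\ref{prop:acyclic} gives $W_{\zeta(D)}=\emptyset$, so the terminal digraph $D_{\zeta(D)}$ has no sinks; I will call $V(D_{\zeta(D)})$ the \emph{core} and $\WWW_{\zeta(D)}=\bigcup_{i=0}^{\zeta(D)-1}W_i$ the \emph{sink layers}. The plan is to split on $\zeta(D)$. When $\zeta(D)=0$, $D$ itself has no sinks, so the structural statement is exactly Lemma~\ref{lem:atleastone} and the index/period assertions are exactly Lemma~\ref{lem:period at least one}; this case needs no further work. For $\zeta(D)\ge 1$ I would analyze $G_m=C^m(D)$ according to whether $2\le m<\zeta(D)$ or $m\ge\zeta(D)$, always keeping $m\ge 2$ as in the hypothesis.

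\textbf{The structural statement.} For $2\le m<\zeta(D)$ the picture is clean: each vertex of $\WWW_m$ lies in some $W_j$ with $j<m\le\zeta(D)-1$, hence has no $m$-step prey by Proposition~\ref{Lem:noprey} and is isolated, while Proposition~\ref{prop:clique2} makes each $V_i\setminus\WWW_m$ a clique; with Proposition~\ref{prop:noedge} this shows $G_m[V_i]$ is a single complete graph together with isolated vertices, well inside the claimed form. For $m\ge\zeta(D)$, every vertex of $\WWW_{\zeta(D)}$ again has no $m$-step prey, so all non-isolated vertices of $G_m$ lie in the core; the core is a sink-free bipartite tournament, so Lemma~\ref{lem:atleastone} describes its $m$-step competition graph as a complete graph or a union of two complete graphs, which is precisely the allowed shape (disjoint, or non-disjoint after deleting the isolated sink vertices). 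The one gap to close is that an edge between two core vertices in $G_m$ might be created by a common prey lying in a sink layer rather than in the core; I would rule out any damage to the two-clique structure by using that a walk leaving the core cannot re-enter it (out-neighbours of $W_j$ lie in $\WWW_j$, disjoint from the core) together with the descending-layer arcs of Proposition~\ref{prop:clique}, to show that such sink-prey edges are already witnessed among the core-prey edges.

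\textbf{Index and period for $\zeta(D)\ge 2$.} I would first obtain $\mathrm{cindex}(D)\ge\zeta(D)$: in $C^{\zeta(D)-1}(D)$ the set $V_i\setminus\WWW_{\zeta(D)-1}$ (with $i$ the part containing $W_{\zeta(D)-1}$) is a clique by Proposition~\ref{prop:clique2} and contains both a vertex of $W_{\zeta(D)-1}$ and a core vertex, so $W_{\zeta(D)-1}$ is non-isolated there, whereas in $C^{\zeta(D)}(D)$ those vertices have no prey and become isolated; thus $C^{\zeta(D)-1}(D)\ne C^{\zeta(D)}(D)$. It then suffices to prove stabilization, $C^m(D)=C^{\zeta(D)}(D)$ for all $m\ge\zeta(D)$, which gives $\mathrm{cindex}(D)=\zeta(D)$ and $\mathrm{cperiod}(D)=1$ at once. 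Since every non-isolated vertex is a core vertex, this reduces to showing that adjacency among core vertices is constant for $m\ge\zeta(D)$. The upward inclusion is easy: a common prey in the core, or in a layer $W_j$ with $j\ge 1$, has an out-neighbour and can be pushed one step further, so the only edges that might fail to persist are those witnessed solely by a true sink in $W_0$; I would show that when $\zeta(D)\ge 2$ every such $W_0$-witnessed edge is simultaneously witnessed by a persisting prey, thereby killing the parity obstruction.

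\textbf{The $\zeta(D)=1$ case and the main obstacle.} When $\zeta(D)=1$ the core is $D_1=D-W_0$, a sink-free bipartite tournament, so its competition graph stabilizes with $\mathrm{cindex}\le 4$ and period $1$ by Lemma~\ref{lem:period at least one}; the extra $W_0$-common-prey edges occur only at a single parity of $m$ (the sinks $W_0$ occupy one part), which is exactly what permits only $\mathrm{cperiod}(D)\le 2$ while still $\mathrm{cindex}(D)\le 4$. The main obstacle throughout is the downward inclusion needed for $\mathrm{cperiod}(D)=1$ when $\zeta(D)\ge 2$, namely showing that no core edge of $C^{m+1}(D)$ is lost at level $m$: competition graphs are not monotone under decreasing $m$ in general, so this step must exploit the specific layered reachability of a bipartite tournament — once a walk descends into the sink layers it proceeds through strictly decreasing layers and terminates within $\zeta(D)-1$ steps (Proposition~\ref{Lem:noprey}), while the core retains walks of every length — to pin the stable value of the core competition graph to the single level $m=\zeta(D)$. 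I expect reconciling the monotone, sink-free behaviour of the core with the parity-sensitive contribution of the sink layers to be the technical heart of the argument.
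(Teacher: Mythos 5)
Your case $\zeta(D)\ge 2$ rests on a gap-closing lemma that is false. You plan to show, for $m\ge\zeta(D)$, that the edges of $G_m$ among core vertices are exactly those of $C^m(D_{\zeta(D)})$ as described by Lemma~\ref{lem:atleastone}, by proving that edges witnessed by a common prey in a sink layer ``are already witnessed among the core-prey edges.'' The truth is the reverse: when $\zeta(D)\ge 2$ both $W_0$ and $W_1$ are nonempty, every vertex of $V_1$ has an arc to every vertex of $W_0$ (a sink of a bipartite tournament absorbs all arcs from the other part), and every vertex of $V_2\setminus W_0$ has an arc to every vertex of $W_1$. Since each core vertex has an $(m-1)$-step prey inside the sink-free core, any two core vertices of the same part acquire a common $m$-step prey in $W_0$ or in $W_1$ (according to the parity of $m$) for \emph{every} $m\ge 2$. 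So the sink layers create edges that core prey cannot. Concretely, take the core to be the directed $4$-cycle $x\to a\to y\to b\to x$, add a sink $w_0\in V_2$ (all arcs into it) and $w_1\in V_1$ with $a\to w_1$, $b\to w_1$, $w_1\to w_0$; then $\zeta(D)=2$, and $x,y$ never have a common $m$-step prey inside the core (their prey cycle around the $4$-cycle out of phase), yet $w_1$ is a $2$-step common prey, so $xy\in E(G_2)$ — the edge your lemma would deny. The same example refutes your persistence patch for period one: at $m=3$ the \emph{only} common prey of $x$ and $y$ is the dead-end sink $w_0$, so this $W_0$-witnessed edge is not ``simultaneously witnessed by a persisting prey''; it survives to $m=4$ only because fresh walks re-descend from the core into $W_1$. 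Your planned mechanism, as stated, fails at this step.

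The idea you are missing — $W_0$ and $W_1$ acting as universal prey for the two parts — is the entire engine of the paper's Case $\zeta(D)\ge 2$: it shows that $V(D_{\zeta(D)})\cap V_i$ is a \emph{full} clique in $G_m$ for every $m\ge 2$, whence $G_m$ is literally constant for $m\ge\zeta(D)$ (one clique per part plus the vertices of $\WWW_{\zeta(D)}$, isolated by Proposition~\ref{Lem:noprey}), and $\mathrm{cperiod}(D)=1$, $\mathrm{cindex}(D)\le\zeta(D)$ follow with no stabilization or parity argument at all — the ``technical heart'' you anticipate does not exist, and the non-disjoint two-clique shape of Lemma~\ref{lem:atleastone} never survives when $\zeta(D)\ge2$ (your version happens to stay within the theorem's disjunction, but the exact single-clique form is what the index and period computations need). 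Your lower bound $\mathrm{cindex}(D)\ge\zeta(D)$ (vertices of $W_{\zeta(D)-1}$ lie in the clique $V_i\setminus\WWW_{\zeta(D)-1}$ of $G_{\zeta(D)-1}$ together with core vertices, but are isolated in $G_{\zeta(D)}$) is the paper's argument and is sound, as are your cases $\zeta(D)=0$ (Lemmas~\ref{lem:atleastone} and~\ref{lem:period at least one}) and $2\le m<\zeta(D)$ (Propositions~\ref{Lem:noprey} and~\ref{prop:clique2}). Your $\zeta(D)=1$ sketch points the right way but would still need the paper's verifications that $W_0$ makes the whole of one part complete at the appropriate parity and that, off $W_0$, the edges of $G_m$ coincide with those of $C^m(D_1)$ because no length-$m$ walk passes through a sink as an interior vertex.
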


\begin{proof}
Let $(W_0, W_1, \ldots, W_{\zeta(D)})$ be the sink sequence of $D$ and $G_{m,i} = G_m[V_i]$ for $i=1,2$.
Suppose $\zeta(D)=0$.
Then, since $D$ has a directed cycle, $W_0 = \emptyset$ by Proposition~\ref{prop:acyclic}.
Therefore the theorem statement is true by Lemmas~\ref{lem:atleastone} and~\ref{lem:period at least one}.

Now suppose  $\zeta(D) \ge 1$.
By Proposition~\ref{lem:oddeven}, $\bigcup_{0 \le i \le {\zeta(D)}/2}{W_{2i}}$ is included in one of the bipartite sets while $\bigcup_{0 \le i \le ({\zeta(D)}-1)/2}{W_{2i+1}}$ is included in the other partite set.
By symmetry, we may assume that $\bigcup_{0 \le i \le {\zeta(D)}/2}{W_{2i}} \subset V_2$ and $\bigcup_{0 \le i \le ({\zeta(D)}-1)/2}{W_{2i+1}} \subset V_1$.
Let $(D_0, D_1, \ldots, D_{\zeta(D)})$ be the digraph sequence associated with $(W_0, \ldots, W_{\zeta(D)})$.

{\it Case 1.} $\zeta(D) = 1$. Then $(W_0,W_1)$ is the sink sequence of $D$ with $W_0 \subset V_2$ and $W_1 \subset V_1$.
Since  $\zeta(D)=1$, $W_1 = V(D_1)$ or $W_1 = \emptyset$.
By the hypothesis, $D$ has a directed cycle, so $W_1 = \emptyset$ by Proposition~\ref{prop:acyclic}.
Thus every vertex in $D_1$ has outdegree at least one.
Obviously, each vertex in $W_0$ is isolated in $G_m$.
Since $W_1 \neq V(D_1)$,
$W_0 \subsetneq V_2$. Thus $D_1$ is a bipartite tournament with bipartition $(V_1,V_2\setminus W_0)$.

Consider the case where $m$ is odd.
Then, since every vertex in $D_1$ has outdegree at least one, every vertex in $V_1$ has an $(m-1)$-step prey in $V_1$ in $D_1$ and thus in $D$.
Therefore each vertex in $W_0$ is an $m$-step prey of each vertex in $V_1$ in $D$.
Thus $G_{m,1}$ is complete.
Furthermore, since every vertex in $D_1$ has outdegree at least one, the subgraph $H$ of $C^m(D_1)$ induced by $V_2\setminus W_0$ is a complete graph or a union of two complete graphs by Lemma~\ref{lem:atleastone}.
Since the subgraph of $C^m(D)$ induced by $V_2\setminus W_0$ is $G_{m,2} - W_0$, $H$ is a subgraph of $G_{m,2} - W_0$.
If an edge $e$ belongs to $G_{m,2} - W_0$, then the end vertices of $e$ have an $m$-step common prey in $D$ which belongs to $V_1$.
Moreover, since each vertex in $W_0$ has outdegree zero, any directed walk of length $m$ does not contain a vertex in $W_0$ as an interior vertex.
Therefore the adjacency of two vertices belonging to $G_{m,2} - W_0$ is inherited to $H$.
Then, since $H$ is a spanning subgraph of $G_{m,2} - W_0$, $H=G_{m,2} - W_0$.
Thus $G_{m,1}$ is complete, the vertices in $W_0$ are isolated in $G_m$, and $G_{m,2} - W_0$ is a complete graph or a union of two complete graphs.
By applying a similar argument for the case in which $m$ is even, we may show that $G_{m,2} - W_0$ is complete, the vertices in $W_0$ are isolated in $G_m$, and $G_{m,1}$ is a complete graph or a union of two complete graphs.
Hence the first part of the theorem is true.

Now we show $\mathrm{cperiod}(D) \le 2$.
Suppose that $x$ and $y$ are adjacent in $G_M$ for some $x, y \in V_2$ and some odd integer $M \ge 3$.
Then $x$ and $y$ have an $M$-step common prey in $D$.
Since $M$ is odd, the $M$-step common prey of $x$ and $y$ in $D$ are contained in $V_1$.
Since every vertex of $D_1$ has outdegree at least one, $x$ and $y$ have an $(M+2)$-step common prey in $D$.
Thus $x$ and $y$ are adjacent in $G_{M+2}$.
By applying a similar argument, we may show that if $x$ and $y$ are adjacent in $G_M$ for $x, y \in V_1$ for some even integer $M \ge 2$, then $x$ and $y$ are adjacent in $G_{M+2}$.
Thus we have shown that
\begin{itemize}
  \item[($\star$)] if two vertices are adjacent in $G_m$ for $m \ge 2$, then they are adjacent in $G_{m+2}$.
\end{itemize}
Note that $G_{N,1}$ is complete and the vertices in $W_0$ are isolated in $G_N$ for any odd integer $N \ge 3$ and $G_{N',2}-W_0$ is complete and the vertices in $W_0$ are isolated in $G_{N'}$ for any even integer $N' \ge 2$.
Hence $\mathrm{cperiod}(D) \le 2$.

In the following, we compute the competition index of $D$.
In a previous argument, we have shown that for any odd integer $N \ge 3$, $G_{N,1}$ is complete and
for any even integer $N'$, $G_{N',2} - W_0$ is complete.
Now take two vertices $x$ and $y$ in $V_1$ which are adjacent in $G_M$ for some even integer $M \ge 4$.
Then $x$ and $y$ have an $M$-step common prey $z$ in $D$.
Since $M$ is even, $z$ belongs to $V_1$.
By the definition of $W_0$, neither any $(x,z)$-directed walk nor any $(y,z)$-directed walk contains a vertex in $W_0$, so $z$ is an $M$-step common prey of $x$ and $y$ in $D_1$.
Since $D_1$ has no sinks, $\mathrm{cperiod}(D_1) = 1$ and $\mathrm{cindex}(D_1) \le 4$ by Lemma~\ref{lem:period at least one}.
Thus $x$ and $y$ are adjacent in $C^4(D_1)$.
Since $C^4(D_1)$ is a subgraph of $G_4$, $x$ and $y$ are adjacent in $G_4$.
By ($\star$), $x$ and $y$ are adjacent in $G_{N'}$ for any even integer $N' \ge 4$.
By applying a similar argument, we may show that if two vertices in $V_2$ are adjacent in $G_M$ for some odd integer $M \ge 5$, then
they are adjacent in $G_N$ for any odd integer $N \ge 5$.
Hence we have shown that $\mathrm{cindex}(D) \le 4$.

{\it Case 2.} $\zeta(D) \ge 2$.
Let $l=\zeta(D)$.
Since $D$ has a directed cycle, $W_l = \emptyset$ by Proposition~\ref{prop:acyclic} and so each vertex in $D_l$ has outdegree at least one.
Since $D$ contains a directed cycle and $D_l$ is a bipartite tournament, $|V(D_l) \cap V_1| \ge 2$.
 Now we take two distinct vertices $u$ and $v$ in $V(D_l) \cap V_1$.
Since each vertex in $D_l$ has outdegree at least one, $u$ (resp.\ $v$) has an $(m-1)$-step prey $u'$ (resp.\ $v'$) in $V(D_l)$. Obviously, $u'$ and $v'$ belong to $V_1$ if $m$ is odd; to $V_2$ if $m$ is even. If $u'$ and $v'$ belong to $V_1$ (resp.\ $V_2$), then they have a common prey $w$ in $W_0$ (resp.\ $W_1$) by the definition of sink sequence.
Therefore $w$ is an $m$-step common prey of $u$ and $v$ in $D$ and so $V(D_l) \cap V_1$ forms a clique in $G_m$.
By a similar argument, it can be shown that   $V(D_l) \cap V_2$ forms a clique in $G_m$.

For notational convenience, let $\WWW_k=\bigcup_{i=0}^{k-1}W_i$ for $1 \le k \le \zeta(D)+1$.

{\it Subcase 1.} $m \ge l$.
Then, by Proposition~\ref{Lem:noprey}, the vertices in $\WWW_l$ are isolated in $G_m$.
Since we have shown that $V(D_l) \cap V_i$ forms a clique in $G_m$, $G_{m,i}$ is isomorphic to
 \[
   K_{|V(D_l) \cap V_i|} \cup  I_{|V_i \setminus V(D_l)|}
   \]
for each $i=1,2$.

{\it Subcase 2.} $2 \le m \le l-1$.
Then each of $V_1 \setminus \WWW_m$ and $V_2 \setminus \WWW_m$ forms a clique in $G_m$ by Proposition~\ref{prop:clique2}.
By Proposition~\ref{Lem:noprey}, the vertices in $\WWW_m$ are isolated in $G_m$.
Thus $G_{m,i}$ is isomorphic to
 \[
   K_{|V_i \setminus \WWW_m|} \cup  I_{|V_i \cap \WWW_m|}
   \]
for each $i=1,2$.

By the conclusion deduced in Subcase~1, $\mathrm{cperiod}(D) =1$ and $\mathrm{cindex}(D) \le l$.
Suppose that $l$ is even.
Then $W_{l-1} \subset V_1$.
By Proposition~\ref{Lem:noprey}, the vertices in $W_{l-1}$ are isolated in $G_l$.
However, by Proposition~\ref{prop:clique2}, $V_1 \setminus \WWW_{l-1}$ forms a clique in $G_{l-1}$. Since $W_{l-1} \subset \left(V_1 \setminus \WWW_{l-1}\right)$, $W_{l-1}$ forms a clique in $G_{l-1}$.
Thus $G_l \neq G_{l-1}$ and so $\mathrm{cindex}(D) \ge l$.
We may apply similar argument to show that $\mathrm{cindex}(D) \ge l$ for an odd $l$.
Hence $\mathrm{cindex}(D) = l$.
\end{proof}

\section{Acknowledgement}
This research was supported by the National Research Foundation of Korea(NRF) funded by the Korea government(MEST) (NRF-2017R1E1A1A03070489) and by the Korea government(MSIP) (2016R1A5A1008055).



\end{document}